\newtheorem{thm}{Theorem}
\newtheorem{prob}{Problem}
\newtheorem{conj}{Conjecture}
\newtheorem{cor}{Corollary}
\newtheorem{lem}{Lemma}
\newtheorem{rem}{Remark}
\begin{document}

\title{Generalized Sidon sets of perfect powers}
\author{S\'andor Z. Kiss \thanks{Institute of Mathematics, Budapest
University of Technology and Economics, H-1529 B.O. Box, Hungary;
kisspest@cs.elte.hu;
This author was supported by the National Research, Development and Innovation Office NKFIH Grant No. K115288 and K129335. 
This paper was supported by the J\'anos Bolyai Research Scholarship of the Hungarian Academy of Sciences. Supported by the \'UNKP-18-4 New National Excellence Program of the Ministry of 
Human Capacities. Supported by the \'UNKP-19-4 New National Excellence Program 
of the Ministry for Innovation and Technology.}, Csaba
S\'andor \thanks{Institute of Mathematics, Budapest University of
Technology and Economics, MTA-BME Lend\"ulet Arithmetic Combinatorics Research Group H-1529 B.O. Box, Hungary, csandor@math.bme.hu.
This author was supported by the NKFIH Grants No. K129335. Research supported by the Lend\"ulet program of the Hungarian Academy of Sciences (MTA).} 
}
\date{}

\maketitle

\begin{abstract}
\noindent  For $h \ge 2$ and an infinite set
of positive integers $A$, let $R_{A,h}(n)$ denote
the number of solutions of the equation 
\[
a_{1} + a_{2} + \dots + a_{h} = n, \hspace*{3mm} a_{1} \in
A, \dots, a_{h} \in A, \hspace*{3mm} a_{1} < 
a_{2} < \dots{} < a_{h}.
\]
In this paper we prove the
existence of a set $A$ formed by perfect powers with almost possible maximal 
density such that $R_{A,h}(n)$ is bounded by using probabilistic methods. 

{\it 2010 Mathematics Subject Classification:} 11B34, 11B75.

{\it Keywords and phrases:}  additive number theory, general
sequences, additive representation function, Sidon set.
\end{abstract}
\section{Introduction}

Let $\mathbb{N}$ denote the set of nonnegative integers and let 
$h, k, m \ge 2$ be integers. For an infinite set
of positive integers $A$, let $R_{A,h}(n)$ and $R^{*}_{A,h}(n)$  denote
the number of solutions of the equations 
\[
a_{1} + a_{2} + \dots + a_{h} = n, \hspace*{3mm} a_{1} \in
A, \dots, a_{h} \in A, \hspace*{3mm} a_{1} < 
a_{2} < \dots{} < a_{h},
\]
\[
a_{1} + a_{2} + \dots + a_{h} = n, \hspace*{3mm} a_{1} \in
A, \dots, a_{h} \in A, \hspace*{3mm} a_{1} \le a_{2} \le \dots{} \le a_{h},
\]
respectively. A set of positive integers $A$ is called $B_h[g]$ set if $R^{*}_{A,h}(n) \le g$ for every postive integer $n$.
We say a set $A$ of nonnegative integers is a basis of order $m$ if every 
nonnegative integer can be 
represented as the sum of $m$ terms from $A$ i.e., $R_{A,m}(n) > 0$ 
for every positive integer $n$. 
Throughout the paper we denote the cardinality of a
finite set $A$ by $|A|$ and we put
\[
A(n) = \sum_{\overset{a \in A}{a \le n}}1.
\]
Furthermore, we write $\mathbb{N}^{k} = \{0^{k}, 1^{k}, 2^{k}, \dots{}\}$ and $(\mathbb{Z}^{+})^{k} = \{1^{k}, 2^{k}, 3^{k}, \dots{}\}$. 
The investigation of the existence of a basis
formed by perfect powers is a classical problem in Number Theory. For
instance, the Waring problem asserts that $\mathbb{N}^{k}$ is a basis of 
order $m$ if $m$ is sufficiently large compared to the power $k$. 
A few years ago, the assertion of Waring was sharpened [17] by proving the existence of a sparse basis formed by perfect powers. More precisely,

\begin{thm}[V.H.Vu]
For any fixed $k \ge 2$, there is a constant $m_{0}(k)$ such that if $m > m_{0}(k)$ then there exists a basis $A \subset \mathbb{N}^{k}$ of order $m$
such that $A(x) \ll x^{1/m}\log^{1/m}x$.
\end{thm}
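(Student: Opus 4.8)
The plan is to construct $A$ by random selection inside $(\mathbb{Z}^{+})^{k}$ and then to verify the density bound and the basis property separately. I would place each power $j^{k}$ in $A$ independently, with probability
\[
p_{j} = \min\left\{1,\, c\, j^{k/m-1}(\log j)^{1/m}\right\},
\]
where $c = c(k,m)$ is a constant to be fixed. Because $m > k$ forces the exponent $k/m - 1$ to be negative, $p_{j}$ is a decaying sequence of admissible probabilities, and since there are about $x^{1/k}$ powers $j^{k} \le x$, a direct summation gives $\mathbb{E}[A(x)] = \sum_{j \le x^{1/k}} p_{j} \asymp x^{1/m}(\log x)^{1/m}$. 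The factor $(\log j)^{1/m}$ is inserted precisely so that the expected number of representations below will grow like $\log n$, which is the threshold needed for a Borel--Cantelli argument. The density bound is then the easy half: $A(x) = \sum_{j \le x^{1/k}} t_{j}$ is a sum of independent indicators, so a Chernoff bound makes the event $A(x) > 2\,\mathbb{E}[A(x)]$ exponentially unlikely, and applying this along a dense sequence of thresholds together with Borel--Cantelli yields $A(x) \ll x^{1/m}(\log x)^{1/m}$ almost surely.

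For the basis property I would write $R_{A,m}(n) = \sum \prod_{i=1}^{m} t_{x_{i}}$, the sum ranging over representations $n = x_{1}^{k} + \dots + x_{m}^{k}$ with $x_{1} < \dots < x_{m}$, and first compute the expectation. By the Hardy--Littlewood circle method, for $m \ge m_{0}(k)$ the number of ordered representations of $n$ as a sum of $m$ positive $k$-th powers is $\asymp n^{m/k-1}$, with the bulk coming from tuples all of whose entries are of order $n^{1/k}$; restricting to strictly increasing tuples only changes the count by a factor $1/m!$, since degenerate tuples contribute a lower-order amount. For such a tuple each factor satisfies $p_{x_{i}} \asymp n^{1/m-1/k}(\log n)^{1/m}$, so the product is $\asymp n^{1-m/k}\log n$, and multiplying by the count $n^{m/k-1}$ gives $\mathbb{E}[R_{A,m}(n)] \asymp \log n$. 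This is exactly the point where the hypothesis $m > m_{0}(k)$ is used, since the asymptotic formula for Waring's problem (and positivity of the singular series) requires $m$ large relative to $k$.

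The main obstacle is passing from this expectation to a lower-tail bound. The quantity $R_{A,m}(n)$ is a degree-$m$ polynomial with nonnegative coefficients in the independent Bernoulli variables $t_{j}$, and its monomials are strongly dependent because distinct representations of $n$ may share summands, so ordinary Chernoff-type estimates fail. The tool I would invoke is Vu's polynomial concentration inequality, which bounds $\mathbb{P}\bigl(|R_{A,m}(n) - \mathbb{E}[R_{A,m}(n)]| \ge \mathbb{E}[R_{A,m}(n)]\bigr)$ in terms of the truncated expectations $E_{j}(n)$, namely the maximum over sets $S$ of $j$ fixed summands of the expected number of ways to complete $S$ to a full representation of $n$. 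Each $E_{j}(n)$ is again controlled by the circle method, now counting representations of $n$ minus a fixed partial power-sum; the crux is to verify that $E_{0}(n) = \mathbb{E}[R_{A,m}(n)]$ dominates the higher-order terms $E_{j}(n)$ strongly enough that the inequality forces $R_{A,m}(n) > 0$ with probability $1 - o(1/n)$. Summing these failure probabilities and applying Borel--Cantelli shows that almost surely every sufficiently large $n$ is representable; adjoining the finitely many small powers needed to cover the remaining $n$ then produces a basis of order $m$ obeying the density bound. Since the density event and the basis event each hold with probability $1$, a set $A$ with both properties exists, which is the assertion.

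I expect the delicate part to be this final concentration step: one must simultaneously keep the log-power in $p_{j}$ exactly at $1/m$ (so that $\mathbb{E}[R_{A,m}(n)]$ is as small as $\log n$, forcing the sharp density) and still show that all orders of truncated expectation are dominated by the main term, so that the tail bound is summable in $n$. Balancing these competing demands against the dependence structure of the representations is where the heart of the argument lies.
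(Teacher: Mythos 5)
You should first note that the paper itself contains no proof of this statement: Theorem 1 is imported verbatim from Vu's paper [17] (\emph{On a refinement of Waring's problem}, Duke Math.\ J.\ 105 (2000)), so the comparison must be with Vu's original argument. Measured against that, your outline reconstructs the correct skeleton: random selection of $j^{k}$ with $p_{j} \asymp j^{k/m-1}(\log j)^{1/m}$, a circle-method computation giving $\mathbb{E}[R_{A,m}(n)] \asymp \log n$, a polynomial concentration inequality to force $R_{A,m}(n) > 0$, and Borel--Cantelli; the density half via Chernoff is routine and correct. This is genuinely Vu's strategy, and your identification of the lower-tail bound as the heart of the matter is accurate.

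However, two of your steps conceal genuine gaps rather than routine verifications. First, the claim that each truncated expectation $E_{j}(n)$ is ``again controlled by the circle method'' does not work: after fixing $j$ summands you must bound, uniformly over all partial sums and all ranges of the free variables, the number of solutions of $y_{1}^{k} + \dots + y_{m-j}^{k} = n - s$ in boxes, and when $m - j$ is small relative to $k$ there is no circle-method asymptotic (the minor arcs are out of control). Vu handles this with a combinatorial box-counting estimate valid for \emph{every} number of variables --- precisely the statement quoted as Lemma 8 in the present paper --- applied over dyadic boxes; without some such uniform counting input your verification that $E_{0}$ dominates the $E_{j}$ cannot be carried out. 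Second, and more seriously, invoking ``Vu's polynomial concentration inequality'' as a black box is circular here: the off-the-shelf Kim--Vu inequality for a degree-$m$ polynomial carries a loss of $\exp((m-1)\log n)$, so to beat it one needs deviations of size roughly $(\log n)^{m}$, which swamps an expectation of order $\log n$; at this expectation level the standard inequality cannot even give $R_{A,m}(n) > 0$ with probability $1 - O(n^{-1-\epsilon})$ (note also that $1 - o(1/n)$, which you ask for, is not summable --- you need a genuine power saving). Proving a concentration inequality that works for ``smooth'' polynomials with expectation as small as $\log n$, given that all truncated expectations are $O(n^{-\delta})$, is exactly the main technical contribution of Vu's paper, not a citable prerequisite for it. A minor further point: adjoining finitely many small powers cannot make every small $n$ a sum of $m$ \emph{distinct} $k$-th powers (e.g.\ $n < 1^{k} + 2^{k} + \dots$ has no such representation), so the basis property must be read, as in Vu, as holding for all sufficiently large $n$.
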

Obvoiusly, if $A$ is a basis of order $m$, then $A(x)^{m} > \binom{A(x)}{m} \ge x + 1$, which yields 
$A(x) \gg x^{1/m}$.   

It is natural to ask if
there exists a $B_{h}[g]$ set formed by $k$-th powers such that $A(x)$
is as large as possible. 
Now, we prove that the best possible exponent is
$\min\left\{\frac{1}{k},\frac{1}{h}\right\}$. It is clear that $A(x) \le x^{1/k}$. 
On the other hand, if $A$ is a $B_{h}[g]$ set, then
\[
hgx \ge \sum_{n=1}^{hx}R^{*}_{A,h}(n) \ge \binom{A(x)}{h} \ge 
\left(\frac{(A(x)-(h-1))^{h}}{h!}\right)
\]
and so $A(x) \le \sqrt[h]{hgx\cdot h!}+h-1$, which implies that
\[
A(x)\ll x^{\min\left\{\frac{1}{k},\frac{1}{h}\right\}}. 
\]
Next, we show that in the special case $k = h = 2$ this can be improved. 
According to a well known theorem of Landau [12], the number of positive integers up to a large $x$ that can be written as the sum of two squares is asymptotically $\frac{cx}{\sqrt{\log x}}$, where $c$ is called Landau-Ramanujan 
constant. On the other hand, if $A$ is a $B_{2}[g]$ set formed by squares, then there are at most $\binom{A(x)}{2}$ integers below $2x$ can be written as the sum of two squares. Then we have
\[
\binom{A(x)}{2} \le \sum_{n=1}^{2x}R^{*}_{A,2}(n) \le (c+o(1))\frac{2x}{\sqrt{\log 2x}},
\]   
which gives 
\[
A(x) \ll \frac{\sqrt{x}}{\sqrt[4]{\log x}}.
\]
In view of the above observations, we can formulate the following conjecture. 

\begin{conj}
For every $k \ge 1$, $h \ge 2$, $\varepsilon > 0$ there exists a $B_{h}[g]$ set
$A \subseteq (\mathbb{Z}^{+})^{k}$ such that
\[
A(x) \gg x^{\min\left\{\frac{1}{k},\frac{1}{h}\right\}-\varepsilon}.
\]
\end{conj}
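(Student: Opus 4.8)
The plan is to take $A$ to be the set of $k$-th powers of a random set $S\subseteq\mathbb{Z}^{+}$ and to translate the $B_{h}[g]$ requirement on $A$ into an additive-coincidence condition on $S$. Writing $A=\{s^{k}:s\in S\}$ we have $A(x)=S(x^{1/k})$, so the desired bound $A(x)\gg x^{\min\{1/k,1/h\}-\varepsilon}$ is equivalent to $S(y)\gg y^{\alpha}$ with $\alpha:=\min\{1,k/h\}-k\varepsilon$ (indeed $\alpha/k=\min\{1/k,1/h\}-\varepsilon$). I would include each $s$ in $S$ independently with probability $p_{s}:=\min\{1,\,s^{\alpha-1}\}$, so that $\mathbb{E}\,S(y)\asymp y^{\alpha}$. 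A Chernoff estimate along $y=2^{j}$, the Borel--Cantelli lemma, and monotonicity of $S(\cdot)$ then yield $S(y)\gg y^{\alpha}$ almost surely; this already delivers the density half of the claim, and the entire difficulty is to arrange, on the same probability space, that $A$ is $B_{h}[g]$ for a suitable $g=g(k,h,\varepsilon)$.

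Next I would control the representation function in mean. Since $R^{*}_{A,h}(n)$ counts the solutions of $s_{1}^{k}+\dots+s_{h}^{k}=n$ with $s_{1}\le\dots\le s_{h}$ and each $s_{i}\in S$, we have $\mathbb{E}\,R^{*}_{A,h}(n)=\sum p_{s_{1}}\cdots p_{s_{h}}$ over those representations (the product taken over the distinct values among the $s_{i}$). Summing over a dyadic block and using $\mathbb{E}\,S(N^{1/k})\asymp N^{\alpha/k}$ gives $\sum_{n\le N}\mathbb{E}\,R^{*}_{A,h}(n)\asymp N^{h\alpha/k}/h!$, and since $h\alpha/k-1=-h\varepsilon$ when $k\le h$ and $h\alpha/k-1=(h/k-1)-h\varepsilon<0$ when $k\ge h$, the average of $\mathbb{E}\,R^{*}_{A,h}(n)$ over $n\le N$ tends to $0$. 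Thus in expectation almost every integer is missed entirely and the typical number of representations is polynomially small.

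To pass from this mean statement to a bound valid for \emph{every} $n$, I would use a deletion argument. If $A$ fails to be $B_{h}[g]$ at $n$, then $n$ has at least $g+1$ distinct $h$-term representations, so the count $W_{g+1}(N)$ of $(g+1)$-tuples of distinct representations sharing a common value in $(N,2N]$ is positive. The heart of the proof is a counting lemma bounding $\mathbb{E}\,W_{g+1}(N)$: one classifies each tuple by the number $v$ of distinct integers it involves and by the number of independent equations $\sum s_{i}^{k}=\sum t_{j}^{k}$ it satisfies, weights it by the corresponding product of the $p_{s}$, and reduces the estimate to bounds for the number of integer solutions of these systems, i.e.\ to the additive energy of the $k$-th powers. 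If the exponents can be made to satisfy $\mathbb{E}\,W_{g+1}(N)\ll N^{-c}$ for a fixed $c>0$ once $g$ is large compared with $1/\varepsilon$, then $\sum_{N=2^{j}}\mathbb{E}\,W_{g+1}(N)<\infty$, so by Borel--Cantelli almost surely only finitely many $n$ carry more than $g$ representations. Deleting one element of $S$ for each of these finitely many $n$ destroys the excess representations without affecting the density, and the surviving set of $k$-th powers is the desired $B_{h}[g]$ set.

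The main obstacle is precisely this counting lemma, and within it the \emph{unbalanced} representations, in which one part is of order $n^{1/k}$ while the others are small: these carry an anomalously large weight $\prod p_{s_{i}}$ and account for the rare integers with many representations, so the naive bound $\mathbb{E}\,R^{*}_{A,h}(n)\le r_{k,h}(n)\cdot\max_{\mathrm{rep}}\prod p_{s_{i}}$, where $r_{k,h}(n)$ is the number of representations of $n$ as a sum of $h$ $k$-th powers, is far too lossy. Making $\mathbb{E}\,W_{g+1}(N)$ genuinely summable requires an honest upper bound for $r_{k,h}(n)$ together with a case analysis that extracts a power saving of size proportional to $g\varepsilon$ from every configuration type; it is here, and in checking that one deletion per bad $n$ creates no new coincidences, that the real work lies. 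I expect the regime $k\le h$ (sparse $S$, balanced representations dominant) to be the cleaner one, and the boundary behaviour as $\varepsilon\to0$ to force $g\to\infty$, consistent with the statement allowing $g$ to depend on $\varepsilon$.
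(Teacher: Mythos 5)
You have set out to prove a statement that the paper itself records only as a conjecture: it is not proved there, and your sketch, while it follows essentially the same Erd\H{o}s--R\'enyi probabilistic scheme the paper uses, leaves the genuinely open part unproved. Everything in your plan hinges on the ``counting lemma'' bounding $\mathbb{E}\,W_{g+1}(N)$, which in turn requires upper bounds on the number $r_{k,h}(n)$ of representations of $n$ as a sum of $h$ $k$-th powers (and on solution counts of the associated systems). That is exactly the arithmetic content that is missing in general: it is a Hypothesis~K--type statement, which Mahler disproved in its pointwise form for $k=3$ and which is open for $k\ge 4$. The paper deals with this in three restricted ways: Theorem~2 simply \emph{assumes} $R_{(\mathbb{Z}^{+})^{k},h}(n)<n^{\eta}$ as a hypothesis; Corollary~1 proves the needed bound elementarily for $h=2$ (via Hua's theorem for even $k$ and a divisor argument for odd $k$); and Theorem~3 takes $h\ge h_{0}(k)=O(8^{k}k^{2})$ so that Vu's lemma (Lemma~8 of the paper), giving $\ll \frac{1}{n}\prod_i P_i+(\prod_i P_i)^{1-k/l}$ solutions in dyadic boxes, supplies the counting input --- this is precisely what tames the ``unbalanced representations'' you correctly identify as the obstacle. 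In the intermediate range (already for $k=2$, $h=3$, the paper's Problem~1) no such input is known, so your lemma cannot currently be established and the proposal is not a proof.

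Two further points where your outline would need repair even granting the counting input. First, your mean-value computation (the average of $\mathbb{E}\,R^{*}_{A,h}(n)$ over $n\le N$ tending to $0$) is too weak to make $\mathbb{E}\,W_{g+1}(N)$ summable, since $W_{g+1}$ concentrates on rare $n$ with anomalously many representations; what the paper's machinery requires is the pointwise bound $\mathbb{E}(R_{A,l}(n))\ll n^{-\varepsilon}$ for \emph{every} $2\le l\le h$, not just $l=h$ --- the sunflower (Erd\H{o}s--Rado) step in Lemma~6 shifts excess representations of $n$ to disjoint representations of $n-t$ by \emph{shorter} subsums, which is why all lengths $l$ must be controlled. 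Given such pointwise bounds, your deletion step is in fact unnecessary: the Erd\H{o}s--Tetali disjointness lemma plus Borel--Cantelli plus the sunflower lemma (Lemmas~3, 2 and 5, combined in Lemma~6) yield boundedness of $R_{A,h}(n)$ almost surely with no surgery on $A$. Second, note that the paper only obtains boundedness of $R_{A,h}(n)$, counting representations with \emph{distinct} parts; upgrading to the full $B_{h}[g]$ condition on $R^{*}_{A,h}$, which permits repeated parts, requires solution-count bounds for linear forms $b_{1}x_{1}+\dots+b_{s}x_{s}=n$ in $k$-th powers, a difficulty the paper's closing remark explicitly flags and which your sketch likewise does not address.
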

\noindent The above conjecture was proved by Erd\H{o}s and R\'enyi [5] when
$k = 1$, $h = 2$. It was also proved [3], [17] 
by $k = 1$, $h > 2$.
It is clear that if Conjecture 1 holds for $h = k$, then it holds for
every $2 \le h \le k$ as well.
Furthermore, it was proved in [2] that for any positive integer $g$ and $\epsilon > 0$, there exists a $B_{2}[g]$ set 
$A$ of squares such that $A(x) \gg x^{\frac{g}{2g+1}-\epsilon} = x^{\frac{1}{2}-\frac{1}{4g+2}-\epsilon}$ 
by using the probabilistic method. This implies Conjecture 1 for $h = k = 2$.
Moreover, a conjecture of Lander, Parkin and Selfridge [13] asserts that 
if the diophantine equation $\sum_{i=1}^{n}x_{i}^{k} = \sum_{j=1}^{m}y_j^{k}$, where
$x_{i} \ne y_{j}$ for all $1 \le i \le n$ and $1 \le j \le m$ has a nontrivial solution, then $n + m \ge k$. If $h < \frac{k}{2}$, this conjecture clearly implies Conjecture 1.
It turns out from Theorem 412 in [10] that
the number of solutions of $a^{3} + b^{3} = c^{3} + d^{3}$ can be made 
arbitrary large, hence the set of cubes is not a $B_{2}[g]$ set for any $g$.  
It is also known [15] that given any real solution of the equation
$a^{4} + b^{4} = c^{4} + d^{4}$, there is a rational solution arbitrary close to 
it, which implies that the quartics cannot be a $B_{2}[1]$. It may happen that they form a $B_{2}[2]$ set. As far as we know, it is not known that the equation $a^{5} + b^{5} = c^{5} + d^{5}$ has any 
nontrivial solution. It is conjectured that the fifth powers form a $B_{2}[1]$ 
set [7,D1]. More generally, Hypothesis K of Hardy and Littlewood [9] asserts 
that if $h = k$, then $R_{(\mathbb{Z}^{+})^{k},k}(n) = O(n^{\varepsilon})$. 
The conjecture is true for $k = 2$ [11, Theorem 7.6] and Mahler proved [14] 
that it is false for $k = 3$. The conjecture is still open for $k \ge 4$ 
[16]. In this paper we prove that if Hypothesis K holds, then there exists a set $A$ of positive perfect powers as dense as in Conjecture 1 such that $R_{A,h}(n)$ is bounded.

\begin{thm}
Let $k$ be a positive integer. Assume that for some $2 \le h \le k$ and for every $\eta > 0$, there exists a positive integer $n_{0}(\eta)$ such that for 
every $n \ge n_{0}(\eta)$, $R_{(\mathbb{Z}^{+})^{k},h}(n) < n^{\eta}$.  
Then for every $\varepsilon > 0$ there exists a set
$A \subseteq (\mathbb{Z}^{+})^{k}$ such that $R_{A,h}(n)$ is bounded and
\[
A(x) \gg x^{\frac{1}{k}-\varepsilon} = x^{\min\{\frac{1}{k},\frac{1}{h}\}-\varepsilon}.
\]
\end{thm}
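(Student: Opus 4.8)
The plan is to take for $A$ a random subset of the positive $k$-th powers and to show that, almost surely, both required properties hold; any single realization in the resulting full-measure event then furnishes the desired deterministic set. We may assume $0<\varepsilon<1/k$, since otherwise $x^{1/k-\varepsilon}$ is bounded and the density claim is trivial. Include each $k$-th power $m^{k}$ ($m=1,2,\dots$) in $A$ independently with probability $p_{m}=\min\{1,m^{-k\varepsilon}\}$, and fix an auxiliary exponent $\eta>0$, to be taken small in terms of $\varepsilon$ and $h$; this is permitted because Hypothesis K is assumed for \emph{every} positive $\eta$.

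For the density, write $A(x)=\sum_{m\le x^{1/k}}\xi_{m}$, where $\xi_{m}=\mathbf{1}[m^{k}\in A]$ are independent. Then $\mathbb{E}[A(x)]=\sum_{m\le x^{1/k}}p_{m}\asymp x^{1/k-\varepsilon}$, and, the $\xi_{m}$ being independent and bounded, a Chernoff bound gives $A(x)\ge\tfrac12\mathbb{E}[A(x)]$ with probability exponentially close to $1$. Applying this along $x=2^{t}$ and using the monotonicity of $A(\cdot)$ to interpolate, Borel--Cantelli yields $A(x)\gg x^{1/k-\varepsilon}$ for all large $x$, almost surely.

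The substance is the boundedness of $R_{A,h}$, which I would establish by induction on $h$, proving simultaneously that almost surely $\sup_{n}R_{A,j}(n)<\infty$ for every $1\le j\le h$ (the case $j=1$ being trivial, as $R_{A,1}\le1$). The descent to smaller $j$ is forced: if $R_{A,h-1}(s)$ were unbounded then, adjoining to the witnessing $(h-1)$-subsets a large $k$-th power of $A$, one would make $R_{A,h}$ unbounded as well. Two preliminary facts feed the induction. First, the hypothesis propagates downward: any $j$ distinct $k$-th powers summing to $n$ can be completed by $h-j$ fixed distinct $k$-th powers just above $n^{1/k}$ to $h$ distinct $k$-th powers summing to some $n'=O(n)$, whence $R_{(\mathbb{Z}^{+})^{k},j}(n)\le R_{(\mathbb{Z}^{+})^{k},h}(n')\ll n^{\eta}$. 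Second, in any representation counted by $R_{A,j}(n)$ the largest summand is a $k$-th power $a\ge n/j$, and the number of representations sharing this largest summand is at most $R_{A,j-1}(n-a)$, hence at most the inductive bound $C_{j-1}$; therefore $C_{j}$ distinct representations of $n$ must involve more than $C_{j}/C_{j-1}$ distinct $k$-th powers exceeding $n/j$, each lying in $A$ only with probability $\le(n/j)^{-\varepsilon}$.

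Combining these, the number of $C_{j}$-tuples of representations of $n$ is at most $R_{(\mathbb{Z}^{+})^{k},j}(n)^{C_{j}}\ll n^{C_{j}\eta}$, while each such tuple is entirely selected with probability $\ll n^{-\varepsilon C_{j}/C_{j-1}}$; hence
\[
\sum_{n}\mathbb{P}\bigl(R_{A,j}(n)\ge C_{j}\bigr)\ll\sum_{n}n^{C_{j}(\eta-\varepsilon/C_{j-1})}<\infty
\]
provided $\eta<\varepsilon/C_{j-1}$ and $C_{j}$ is then chosen large. As $\eta\to0$ the constants $C_{j}\approx\varepsilon^{-(j-1)}$ tend to finite limits, so a single choice of $\eta$ small in terms of $\varepsilon$ and $h$ makes the whole chain $C_{1},\dots,C_{h}$ admissible; Borel--Cantelli then gives that almost surely $R_{A,j}(n)<C_{j}$ for all but finitely many $n$, and the finitely many exceptions satisfy the deterministic bound $R_{A,j}(n)\le R_{(\mathbb{Z}^{+})^{k},j}(n)<\infty$. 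Intersecting with the density event completes the construction. The main obstacle is precisely this coupling between levels: the naive factorial moment of $R_{A,h}(n)$ alone diverges when summed over $n$, because a single heavily shared large summand contributes too much, and taming it requires the simultaneous induction together with careful handling of the few small sums $n-a$ that fall in the exceptional ranges of the lower-order bounds, where independence and the deterministic thresholds must be tracked explicitly.
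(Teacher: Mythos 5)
Your overall architecture matches the paper's: the same random set ($\mathbb{P}(m^{k}\in A)=m^{-k\varepsilon}$), the same downward propagation of Hypothesis K to all $2\le l\le h$ by appending $h-l$ fixed distinct $k$-th powers (giving $R_{(\mathbb{Z}^{+})^{k},l}(n)\ll n^{\eta}$), and the same Chernoff--Borel--Cantelli density argument. The genuine gap is at the crux, in your displayed estimate $\sum_{n}\mathbb{P}\bigl(R_{A,j}(n)\ge C_{j}\bigr)\ll\sum_{n}n^{C_{j}(\eta-\varepsilon/C_{j-1})}$. For a \emph{fixed} $C_{j}$-tuple of representations of $n$, the claim that it is "entirely selected with probability $\ll n^{-\varepsilon C_{j}/C_{j-1}}$" is false: the tuple may consist of $C_{j}$ representations all sharing one largest summand $a$ (differing only in how $n-a$ is decomposed), in which case its selection probability carries only a single factor $\le (n/j)^{-\varepsilon}$ from a large element. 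Your guarantee that $C_{j}$ representations force more than $C_{j}/C_{j-1}$ distinct large summands holds only on the inductive event $\sup_{m}R_{A,j-1}(m)<C_{j-1}$, which is a global function of the same coin flips; you cannot condition on that event and still multiply the membership probabilities, nor restrict the union bound to "good" tuples without separately bounding the bad case. And the bad case does not close by a union bound at the previous level: since all parts of a representation of $n-a$ with largest summand $a$ are $<a$, independence gives $\sum_{n}\sum_{a}\mathbb{P}(a\in A)\,\mathbb{P}\bigl(R_{A,j-1}(n-a)\ge C_{j-1}\bigr)=\Bigl(\sum_{m}\mathbb{P}\bigl(R_{A,j-1}(m)\ge C_{j-1}\bigr)\Bigr)\Bigl(\sum_{a}a^{-\varepsilon}\Bigr)$, and the second factor diverges because $k\varepsilon<1$. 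Your closing paragraph names exactly this obstruction ("a single heavily shared large summand contributes too much") but defers its resolution to unspecified "careful handling," so the proof is incomplete precisely where the work lies.

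This is the difficulty the paper's Lemma 6 (following Erd\H{o}s--Tetali) is built to dissolve, by a route you did not take. Instead of moments of $R_{A,l}$ itself, one controls $f_{l}(n)$, the maximal number of \emph{pairwise disjoint} representations: disjoint representations are genuinely independent events, so the disjointness lemma (Lemma 3) yields $\mathbb{P}(f_{l}(n)>G)\le \mathbb{E}(R_{A,l}(n))^{G+1}/(G+1)!\ll n^{-1-\varepsilon'}$ with no conditioning, and Borel--Cantelli applies cleanly. The return from bounded $f_{l}$ (for all $2\le l\le h$) to bounded $R_{A,h}$ is then the Erd\H{o}s--Rado sunflower lemma (Lemma 5): if $R_{A,h}(m)>c_{\max}^{h}h!$, one extracts $c_{\max}+1$ representations forming a $\Delta$-system with common core of sum $t$ and size $r$, and deleting the core produces $c_{\max}+1$ pairwise disjoint representations of $m-t$ as $h-r$ terms, contradicting the bound on $f_{h-r}$. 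That step is deterministic combinatorics, so the shared-summand configurations that break your tuple count cost nothing probabilistically. Your induction could plausibly be repaired along the lines you hint at (bounding, for each $n$, the number of distinct largest summands in $A$ that extend to full representations, and absorbing the almost surely finitely many exceptional values of $n-a$ into a random but finite constant), but that repair must actually be executed; as written, the key summability display is unjustified.
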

If $k \ge 2$ is even, then it is clear from [11, Theorem 7.6] that 
$R^{*}_{(\mathbb{Z}^{+})^{k},2}(n) \le R^{*}_{(\mathbb{Z}^{+})^{2},2}(n) = n^{o(1)}$.
If $k \ge 2$ is odd, then clearly 
\[
R^{*}_{(\mathbb{Z}^{+})^{k},2}(n) = |\{(a,b): 1 \le a \le b, a^{k} + b^{k} = n\}|.
\] 
It follows that $a + b$ divides $n$. We show that for every divisor $d$ of $n$,
there is at most one pair $(a,b)$, $1 \le a < b$ such that 
$a + b = d$. For $0 < x < \frac{d}{2}$ consider the function 
$f(x) = x^{k} + (d-x)^{k}$. Since $f(x)$ is continuous and strictly decreasing, 
then it assumes every values at most once. It follows that 
$R^{*}_{(\mathbb{Z}^{+})^{k},2}(n) \le d(n) = n^{o(1)}$, where $d(n)$ is the number of positive divisors of $n$. As a corollary, we get that our conjecture is true for $h = 2$. 

\begin{cor}
For every $k \ge 2$, $\varepsilon > 0$ there exists a $B_{2}[g]$ set
$A \subseteq (\mathbb{Z}^{+})^{k}$ such that
\[
A(x) \gg x^{\frac{1}{k}-\varepsilon} = x^{\min\{\frac{1}{k},\frac{1}{h}\}-\varepsilon}.
\]
\end{cor}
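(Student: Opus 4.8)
The plan is to deduce the corollary directly from Theorem 2, specialised to $h = 2$, after verifying that the hypothesis of Theorem 2 is met for this value of $h$ and every $k \ge 2$. That hypothesis concerns the strictly increasing representation function $R_{(\mathbb{Z}^{+})^{k},2}(n)$, and since every strictly increasing representation is in particular a non-decreasing one we have $R_{(\mathbb{Z}^{+})^{k},2}(n) \le R^{*}_{(\mathbb{Z}^{+})^{k},2}(n)$; it therefore suffices to bound the non-decreasing count $R^{*}_{(\mathbb{Z}^{+})^{k},2}(n)$ by $n^{o(1)}$.

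First I would verify this bound according to the parity of $k$, as in the paragraph preceding the statement. When $k$ is even, writing $a^{k} = (a^{k/2})^{2}$ and using that $a \mapsto a^{k/2}$ is injective on the positive integers shows that each representation of $n$ as a sum of two $k$-th powers yields a distinct representation as a sum of two squares, so $R^{*}_{(\mathbb{Z}^{+})^{k},2}(n) \le R^{*}_{(\mathbb{Z}^{+})^{2},2}(n)$, and the $k = 2$ case of Hypothesis K from [11, Theorem 7.6] gives the bound $n^{o(1)}$. When $k$ is odd, I would use that in any solution of $a^{k} + b^{k} = n$ the sum $a + b$ divides $n$, together with the fact that for a fixed $d = a + b$ the function $x \mapsto x^{k} + (d-x)^{k}$ is strictly monotone and hence assumes each value at most once; thus each divisor of $n$ contributes at most one pair, giving $R^{*}_{(\mathbb{Z}^{+})^{k},2}(n) \le d(n) = n^{o(1)}$. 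In either case the hypothesis of Theorem 2 holds with $h = 2$.

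Granting this, I would apply Theorem 2 with $h = 2$ to produce, for a given $\varepsilon > 0$, a set $A \subseteq (\mathbb{Z}^{+})^{k}$ satisfying $A(x) \gg x^{1/k - \varepsilon}$ and for which $R_{A,2}(n)$ is bounded, say $R_{A,2}(n) \le C$ for all $n$. The last step is to translate this into the $B_{2}[g]$ condition, which is phrased in terms of $R^{*}_{A,2}$ rather than $R_{A,2}$. For $h = 2$ the two functions differ only by the single diagonal solution with $a = b$ (that is, $2a = n$), so $R^{*}_{A,2}(n) \le R_{A,2}(n) + 1 \le C + 1$ for every $n$; taking $g = C + 1$ exhibits $A$ as a $B_{2}[g]$ set of $k$-th powers with $A(x) \gg x^{1/k - \varepsilon}$, and since $\min\{1/k, 1/2\} = 1/k$ for $k \ge 2$ this is exactly the asserted bound.

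I do not expect a substantive obstacle: the corollary is essentially a repackaging of Theorem 2 with the elementary verification of the representation bound for $h = 2$. The only points demanding care are the two bookkeeping steps involving the distinction between $R$ and $R^{*}$ — the inequality $R \le R^{*}$ that lets the hypothesis verification pass through, and the single extra diagonal term that must be added when converting the bounded $R_{A,2}$ into the $B_{2}[g]$ bound on $R^{*}_{A,2}$.
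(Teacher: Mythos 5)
Your proposal is correct and follows essentially the same route as the paper: the paper likewise verifies the hypothesis of Theorem 2 unconditionally for $h=2$ by bounding $R^{*}_{(\mathbb{Z}^{+})^{k},2}(n)$ — via reduction to sums of two squares and [11, Theorem 7.6] for even $k$, and via the divisor $a+b \mid n$ together with the monotonicity of $x^{k}+(d-x)^{k}$ for odd $k$ — and then invokes Theorem 2. The only difference is that you make explicit the two trivial bookkeeping steps between $R$ and $R^{*}$ (the inequality $R_{A,2}(n) \le R^{*}_{A,2}(n)$ and the extra diagonal term $R^{*}_{A,2}(n) \le R_{A,2}(n)+1$), which the paper leaves implicit.
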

\noindent Furthermore, we do not even know whether there exists $A \subseteq (\mathbb{Z}^{+})^{2}$ such that $R_{A,3}(n)$ is bounded and
$A(x) \gg x^{\frac{1}{3}-\varepsilon}$.

\begin{prob}
Does there exist $A \subseteq (\mathbb{Z}^{+})^{2}$ such that $R_{A,3}(n)$ is bounded and for any $\varepsilon > 0$, $A(x) \gg x^{\frac{1}{3}-\varepsilon}$?
\end{prob}

\begin{thm}
For every $k \ge 2$ there exists a positive integer $h_{0}(k) = O(8^{k}k^{2})$ 
such that for every
$h \ge h_{0}(k)$ and for every $\varepsilon > 0$ there exists a set
$A \subseteq (\mathbb{Z}^{+})^{k}$ such that $R_{A,h}(n)$ is bounded and
\[
A(x) \gg x^{\frac{1}{h}-\varepsilon} = x^{\min\{\frac{1}{k},\frac{1}{h}\}-\varepsilon}.
\]
\end{thm}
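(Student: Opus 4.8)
The plan is to build $A$ by random selection from the $k$-th powers and to bound the representation function through a high-moment estimate combined with the Borel--Cantelli lemma. For a base $m \ge 1$, include the element $m^{k}$ in a random set $A_{0}$ independently with probability $p_{m}=\min\{1,\,c\,m^{-\alpha}\}$, where
\[
\alpha = 1-\frac{k}{h}+k\varepsilon .
\]
Since $h\ge h_{0}(k)\ge k$ we may assume $\varepsilon<1/h$ (otherwise the exponent $\min\{1/k,1/h\}-\varepsilon$ is non-positive and the statement is vacuous), so $\alpha\in(0,1)$ and $p_{m}\le 1$. This value of $\alpha$ is exactly the one matching the target density, since $\mathbb{E}[A_{0}(x)]=\sum_{m\le x^{1/k}}p_{m}\asymp x^{(1-\alpha)/k}=x^{1/h-\varepsilon}$.

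First I would dispose of the density. Because the events $\{m^{k}\in A_{0}\}$ are independent, $A_{0}(x)$ is a sum of independent indicators with mean $\asymp x^{1/h-\varepsilon}$, so a Chernoff bound makes $\mathbb{P}\bigl(A_{0}(x)<\tfrac12\mathbb{E}[A_{0}(x)]\bigr)$ exponentially small; evaluating along a sufficiently dense sequence of $x$ and applying Borel--Cantelli yields that almost surely $A_{0}(x)\gg x^{1/h-\varepsilon}$ for all large $x$. This step is routine and uses nothing about $k,h$ beyond $\alpha<1$.

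The substance is to show that almost surely $R_{A_{0},h}(n)$ is bounded. For a fixed integer $t$ I would use $\mathbb{P}(R_{A_{0},h}(n)\ge t)\le \mathbb{E}\binom{R_{A_{0},h}(n)}{t}$ and sum over $n$, obtaining
\[
\sum_{n}\mathbb{E}\binom{R_{A_{0},h}(n)}{t}=\sum_{\{r_{1},\dots,r_{t}\}}\ \prod_{a\in r_{1}\cup\dots\cup r_{t}}p(a),
\]
the outer sum running over unordered $t$-element families of distinct strictly increasing $h$-term representations by $k$-th powers that share a common sum. When that common sum has size $X$, the generic families (the $r_{j}$ pairwise almost disjoint, all $ht$ bases of size $\asymp X^{1/k}$) contribute $\asymp X^{1-ht\varepsilon}$ to the partial sums, a bounded and in fact convergent contribution as soon as $t>1/(h\varepsilon)$; I would therefore fix $t=\lfloor 1/(h\varepsilon)\rfloor+1$. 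Granting the bound on the whole sum, Borel--Cantelli shows that almost surely only finitely many $n$ have $R_{A_{0},h}(n)\ge t$, and since every $R_{A_{0},h}(n)\le R_{(\mathbb{Z}^{+})^{k},h}(n)<\infty$, almost surely $\sup_{n}R_{A_{0},h}(n)<\infty$. Together with the density estimate, $A=A_{0}$ then satisfies the theorem, and no deletion step is needed.

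The main obstacle, and the only place where the hypothesis $h\ge h_{0}(k)$ is used, is controlling the non-generic (clustered) families in the displayed sum, where the $r_{j}$ share many elements. Since each $p(a)\le 1$, overlaps remove factors from $\prod_{a\in r_{1}\cup\dots\cup r_{t}}p(a)$ and therefore \emph{increase} the weight of a configuration, so the clustered families are precisely the dangerous ones. I would classify the $t$-element families by their overlap pattern; each pattern collapses to a weighted count of solutions of an auxiliary equation of the shape $\sum\pm m_{i}^{k}=0$ in a reduced number of variables, and the point is that every such count must be dominated by the generic estimate above, uniformly in $t$. This is where uniform mean-value bounds for the Weyl sums $f(\theta)=\sum_{m\le N}e(m^{k}\theta)$ (Hua-type inequalities) enter: they guarantee that the auxiliary equations have only the expected number of solutions once enough variables are present, and tracking the least favourable overlap pattern is what produces the explicit threshold $h_{0}(k)=O(8^{k}k^{2})$. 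Carrying out this bookkeeping, so that no overlap pattern yields a divergent contribution, is the delicate part of the argument that the lower bound on $h$ is designed to make possible.
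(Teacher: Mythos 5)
Your setup is identical to the paper's: the inclusion probabilities $p_{m}\asymp m^{-\alpha}$ with $\alpha=1-\frac{k}{h}+k\varepsilon$ are exactly the paper's measure $\mathbb{P}(m^{k}\in A)=(m^{k})^{-(\frac{1}{k}-\frac{1}{h}+\varepsilon)}$, and the density step (Chernoff plus Borel--Cantelli, as in the paper's Lemma 7) is fine. The gap is in the boundedness step. Your central claim --- that $\sum_{n}\mathbb{E}\binom{R_{A_{0},h}(n)}{t}<\infty$ for a fixed $t$, because every overlap pattern is ``dominated by the generic estimate, uniformly in $t$'' --- is false, and no Hua-type bookkeeping can rescue it. A clustered family is not merely a solution of a difference equation $\sum\pm m_{i}^{k}=0$: it also carries a free common core, and once you sum over $n$ the core variables satisfy no equation at all, so they contribute an unconstrained divergent factor. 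Concretely, take $k=2$ or $k=3$ and fix an $M$ with at least $t$ representations $a_{j}^{k}+b_{j}^{k}=M$; such $M$ exist for every $t$ (sums of two squares for $k=2$; Theorem 412 of Hardy--Wright for $k=3$, as cited in the paper itself). For every $(h-2)$-element set $S$ of $k$-th powers avoiding the $a_{j}^{k},b_{j}^{k}$, the $t$ sets $S\cup\{a_{j}^{k},b_{j}^{k}\}$ are distinct representations of $n=\sigma(S)+M$ and form a family counted in your sum with weight $c_{t}\prod_{s\in S}p(s)$, $c_{t}>0$ fixed. Summing over all admissible $S$ gives $\gg\bigl(\sum_{m}p_{m}\bigr)^{h-2}=\infty$, since $\alpha<1$ makes $\sum_{m}p_{m}$ diverge. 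So the full binomial moment diverges for every fixed $t$, and your Borel--Cantelli step never starts. (The conclusion $\sup_{n}R_{A_{0},h}(n)<\infty$ a.s.\ is still true --- but your bound on $\mathbb{P}(R_{A_{0},h}(n)\ge t)$ is not summable.)

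This is exactly the difficulty the paper's two combinatorial lemmas are designed to circumvent, and it forces a structurally different argument, not just more careful bookkeeping. Following Erd\H{o}s--Tetali, the paper takes moments only over families of \emph{pairwise disjoint} representations: for those the events are genuinely independent, and the disjointness lemma (Lemma 3) gives the factorial-decay bound $\mu^{G+1}/(G+1)!$ with $\mu=\mathbb{E}(R_{A,l}(n))\ll n^{-\varepsilon}$, which is summable; Borel--Cantelli then bounds almost surely the maximal number $f_{l}(n)$ of pairwise disjoint representations at \emph{every} order $2\le l\le h$. The clustered configurations that kill your sum are afterwards handled deterministically by the Erd\H{o}s--Rado sunflower lemma (Lemma 5): if $R_{A,h}(n)$ were larger than $(c_{\max})^{h}h!$, one extracts a $\Delta$-system whose petals, with the common core removed, are many pairwise disjoint representations of $n-t$ at some smaller order $h-r$, contradicting the bound on $f_{h-r}$. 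This also explains why the expectation bound is needed for all $2\le l\le h$ and not just $l=h$, which your proposal never addresses. Finally, your instinct about where $h_{0}(k)=O(8^{k}k^{2})$ comes from is half right: circle-method input does produce it, but through Vu's box-counting lemma (the paper's Lemma 8) applied with a dyadic decomposition to estimate the single-representation expectation $\mathbb{E}(R_{A,l}(n))$ in the range $\frac{h}{k}<l\le h$ (the range $l\le\frac{h}{k}$ being elementary), not through a classification of overlap patterns at order $h$.
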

\noindent If $f(x)$ and $g(x)$ are real-valued functions, then we denote $f(x) = O(g(x))$
by $f(x) \ll g(x)$. Before we prove Theorem 2 and Theorem 3 we give a short 
survey of the probabilistic method we will use.

\section{Probabilistic and combinatorial tools}
The proofs of Theorem 2 and 3 are based on the probabilistic method due to Erd\H{o}s and R\'enyi. There is an excellent summary of the probabilistic method in the Halberstam - Roth book [8]. First we give
a survey of the probabilistic tools and notations which we use in the proofs of
Theorem 2 and 3. Let $\Omega$ denote the set of the strictly increasing sequences of positive integers. In this paper we denote the probability of an event $E$ by $\mathbb{P}(E)$.

\begin{lem}
Let 
\[
\alpha_{1}, \alpha_{2}, \alpha_{3} \dots{} 
\]
be real numbers satisfying 
\[
0 \le \alpha_{n} \le 1 \hspace*{4mm} (n = 1, 2, \dots{}).
\]
Then there exists a probability space ($\Omega$, $\mathcal{S}$, $\mathbb{P}$) with the
following two properties:
\begin{itemize}
\item[(i)] For every natural number $n$, the event $E^{(n)} = \{A$:
  $A \in \Omega$, $n \in A\}$ is measurable, and
  $\mathbb{P}(E^{(n)}) = \alpha_{n}$.
\item[(ii)] The events $E^{(1)}$, $E^{(2)}$, ... are independent.
\end{itemize}
\end{lem}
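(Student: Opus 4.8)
The plan is to realize the abstract space $\Omega$ concretely as a countable product of two-point spaces and then transport the resulting product measure back to $\Omega$. To each strictly increasing sequence $A \in \Omega$ I associate its indicator function $\omega = (\omega_1, \omega_2, \dots) \in \{0,1\}^{\mathbb{Z}^+}$, where $\omega_n = 1$ if $n \in A$ and $\omega_n = 0$ otherwise. This is a bijection between $\Omega$ and $\{0,1\}^{\mathbb{Z}^+}$, under which the membership event $E^{(n)}$ becomes the coordinate cylinder $\{\omega : \omega_n = 1\}$. It therefore suffices to build the desired measure on the product space $\{0,1\}^{\mathbb{Z}^+}$.

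For each $n$ I equip the two-point set $X_n = \{0,1\}$ with the $\sigma$-algebra $\mathcal{A}_n$ of all its subsets and the probability measure $\mu_n$ determined by $\mu_n(\{1\}) = \alpha_n$ and $\mu_n(\{0\}) = 1 - \alpha_n$; the hypothesis $0 \le \alpha_n \le 1$ guarantees that $\mu_n$ is a genuine probability measure. I then form the product $\sigma$-algebra $\mathcal{S} = \bigotimes_{n \ge 1} \mathcal{A}_n$ generated by the coordinate cylinders and take $\mathbb{P} = \bigotimes_{n \ge 1} \mu_n$ to be the product measure. The existence and uniqueness of such a countable product of probability measures is the content of the Kolmogorov extension theorem (equivalently, of the standard construction of infinite product measures), so $(\Omega, \mathcal{S}, \mathbb{P})$ is well defined.

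With this construction both conclusions are immediate from the definition of the product measure. For (i), $E^{(n)} = \{\omega : \omega_n = 1\}$ is a single-coordinate cylinder, hence lies in $\mathcal{S}$, and $\mathbb{P}(E^{(n)}) = \mu_n(\{1\}) = \alpha_n$. For (ii), given any finite index set $n_1 < n_2 < \dots < n_r$, the intersection $E^{(n_1)} \cap \dots \cap E^{(n_r)}$ is exactly the cylinder prescribing $\omega_{n_1} = \dots = \omega_{n_r} = 1$, and its product measure factorizes as $\prod_{j=1}^{r} \mu_{n_j}(\{1\}) = \prod_{j=1}^{r} \alpha_{n_j} = \prod_{j=1}^{r} \mathbb{P}(E^{(n_j)})$. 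Since this holds for every finite subcollection, the events $E^{(1)}, E^{(2)}, \dots$ are independent.

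The only substantive point is the existence of the infinite product measure, and this is where essentially all the work sits. Concretely, one defines $\mathbb{P}$ first on the algebra of finite cylinders by the product formula $\prod_{n \in B} \alpha_n \prod_{n \in F \setminus B}(1 - \alpha_n)$ (for the event that $A \cap F = B$, with $F \subseteq \mathbb{Z}^+$ finite and $B \subseteq F$), checks that this is a finitely additive premeasure, verifies countable additivity on the algebra, and extends via Carath\'eodory to $\mathcal{S}$; the compactness argument needed for countable additivity on cylinder algebras is standard. I expect no difficulty beyond this, and indeed one could instead give a fully explicit model by taking Lebesgue measure on $[0,1)$, extracting a sequence $U_1, U_2, \dots$ of independent uniform variables from the binary digits via a bijection $\mathbb{Z}^+ \to \mathbb{Z}^+ \times \mathbb{Z}^+$, and declaring $n \in A$ precisely when $U_n < \alpha_n$; this realizes the same measure without invoking the extension theorem as a black box.
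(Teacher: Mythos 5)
Your construction is correct and is essentially the paper's own route: the paper gives no proof of this lemma, citing instead Theorem 13 of Halberstam--Roth (p.~142), whose proof is exactly this infinite product of two-point measures on $\{0,1\}^{\mathbb{Z}^{+}}$, with measurability of the cylinders $E^{(n)}$ and the finite factorization property yielding (i) and (ii) just as you argue. The one point worth flagging is that your bijection between $\Omega$ and $\{0,1\}^{\mathbb{Z}^{+}}$ implicitly counts finite (and empty) increasing sequences as members of $\Omega$ --- when $\sum_n \alpha_n < \infty$ the random set is almost surely finite, so this convention is genuinely needed --- but it is harmless and is the same convention used in the cited source.
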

See Theorem 13. in [8], p. 142. 
We denote the
characteristic function of the event $E^{(n)}$ by
\[
\varrho(A, n) = 
\left\{
\begin{aligned}
1 \textnormal{, if } n \in A \\
0 \textnormal{, if } n \notin A.
\end{aligned} \hspace*{3mm}
\right.
\]
Thus
\begin{equation}
A(n) = \sum_{j=1}^{n}\varrho(A, j).
\end{equation}

\noindent Furthermore, we denote the number of solutions of
$a_{i_{1}} + a_{i_{2}} + \dots{} + a_{i_{h}} = n$ by $R_{A,h}(n)$, where 
$a_{i_{1}} \in A$, $a_{i_{2}} \in A$, ...,$a_{i_{h}} \in A$, $1 \le a_{i_{1}} < a_{i_{2}} \dots{}
< a_{i_{h}} < n$.  
Thus 
\begin{equation}
R_{A,h}(n) = \sum_{\overset{(a_{1}, a_{2}, \dots{}, a_{h}) \in \mathbb{N}^{h}}{1
    \le a_{1} < \dots{} < a_{h} < n}\atop {a_{1} + a_{2} + \dots{} + a_{h} =
    n}}\varrho(A, a_{1})\varrho(A, a_{2}) \dots{}
\varrho(A, a_{h}).
\end{equation}

The following lemma is called Borel - Cantelli lemma.

\begin{lem}
Let ($\mathcal{X}$, $\mathcal{S}$, $\mathbb{P}$) be a probability space and let $F_{1}$, $F_{2}$, ... be
a sequence of measurable events. If 
\[
\sum_{j=1}^{+\infty}\mathbb{P}(F_{j}) < +\infty,
\]
\noindent then with probability 1, at most a finite number of the events
$F_{j}$ can occur.
\end{lem}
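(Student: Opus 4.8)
The plan is to realize the event ``infinitely many of the $F_{j}$ occur'' as the set-theoretic limit superior of the $F_{j}$, and then to bound its probability by the tail of the convergent series $\sum_{j}\mathbb{P}(F_{j})$. Concretely, I would first observe that the conclusion ``at most finitely many $F_{j}$ occur'' is precisely the complement of the event
\[
F = \limsup_{j \to \infty} F_{j} = \bigcap_{n=1}^{\infty}\bigcup_{j=n}^{\infty}F_{j},
\]
so it suffices to prove $\mathbb{P}(F) = 0$. Since each $F_{j} \in \mathcal{S}$ and $\mathcal{S}$ is a $\sigma$-algebra, the countable unions $\bigcup_{j=n}^{\infty}F_{j}$ and their countable intersection $F$ are again measurable, so $\mathbb{P}(F)$ is well defined; this measurability check is the only point that genuinely needs to be recorded before the estimate.

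The key step is the inclusion $F \subseteq \bigcup_{j=n}^{\infty}F_{j}$, valid for \emph{every} fixed $n$, which is immediate from writing $F$ as an intersection over $n$. Applying monotonicity of $\mathbb{P}$ followed by countable subadditivity then gives, for each $n$,
\[
\mathbb{P}(F) \le \mathbb{P}\left(\bigcup_{j=n}^{\infty}F_{j}\right) \le \sum_{j=n}^{\infty}\mathbb{P}(F_{j}).
\]

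Finally I would let $n \to \infty$. The hypothesis $\sum_{j=1}^{\infty}\mathbb{P}(F_{j}) < +\infty$ forces the tail $\sum_{j=n}^{\infty}\mathbb{P}(F_{j})$ to tend to $0$, so the right-hand side can be made arbitrarily small; since the left-hand side $\mathbb{P}(F)$ does not depend on $n$, we conclude $\mathbb{P}(F) = 0$, which is exactly the assertion. I do not expect any real obstacle here: the entire argument rests on countable subadditivity of the measure together with the elementary fact that the tail of a convergent series vanishes, so it is essentially a one-line consequence of the $\sigma$-additivity axiom. The only item deserving explicit mention, as noted above, is that $F$ is measurable, which is what legitimizes the symbol $\mathbb{P}(F)$ in the first place.
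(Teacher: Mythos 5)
Your proof is correct: it is the standard and complete argument for the first Borel--Cantelli lemma, identifying ``infinitely many $F_j$ occur'' with $\limsup_j F_j$, bounding its probability by the tail $\sum_{j\ge n}\mathbb{P}(F_j)$ via monotonicity and countable subadditivity, and letting $n\to\infty$. The paper gives no proof of its own --- it simply cites Halberstam and Roth --- and your argument is precisely the canonical one found there, so there is nothing further to compare.
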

See [8], p. 135. The next lemma is called the disjointness lemma due to Erd\H{o}s and Tetali.
\begin{lem}
Let $B_{1}$, $B_{2} \dots{}$ be events in a probability space. If $\sum_{i}\mathbb{P}(B_{i}) \le \mu$, then
\[
\sum_{\overset{(B_{i_{1}}, \dots{} ,B_{i_{l}l})}{independent}}\mathbb{P}(B_{i_{1}} \cap
\dots{} \cap B_{i_{l}}) \le {\mu}^{l}/l!.
\]
\end{lem}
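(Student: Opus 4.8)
The plan is to reduce the statement to a purely combinatorial inequality about elementary symmetric functions of the numbers $p_{i} := \mathbb{P}(B_{i})$. By hypothesis $\sum_{i} p_{i} \le \mu$, and of course each $p_{i} \ge 0$. The sum in the lemma ranges over unordered $l$-element collections of indices $i_{1} < i_{2} < \dots < i_{l}$ for which the events $B_{i_{1}}, \dots, B_{i_{l}}$ are mutually independent; the factor $1/l!$ on the right-hand side is precisely the normalization corresponding to counting each such collection once rather than once per ordering.

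First I would invoke the defining property of (mutual) independence: whenever $B_{i_{1}}, \dots, B_{i_{l}}$ are independent, one has $\mathbb{P}(B_{i_{1}} \cap \dots \cap B_{i_{l}}) = p_{i_{1}} p_{i_{2}} \cdots p_{i_{l}}$. Hence the left-hand side of the claimed inequality equals the sum of the products $p_{i_{1}} \cdots p_{i_{l}}$ taken over independent collections. Since every term is nonnegative, dropping the independence restriction can only enlarge the sum, so it is bounded above by the full elementary symmetric sum
\[
e_{l} := \sum_{i_{1} < \dots < i_{l}} p_{i_{1}} p_{i_{2}} \cdots p_{i_{l}}.
\]

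The heart of the matter is then the inequality $e_{l} \le \left(\sum_{i} p_{i}\right)^{l}/l!$. This follows from the multinomial theorem: expanding $\left(\sum_{i} p_{i}\right)^{l}$ produces only nonnegative terms, and among them the terms indexed by $l$ distinct indices, each occurring with multinomial coefficient $l!/(1!\cdots 1!) = l!$, already contribute $l!\,e_{l}$. Therefore $\left(\sum_{i} p_{i}\right)^{l} \ge l!\,e_{l}$, that is $e_{l} \le \left(\sum_{i} p_{i}\right)^{l}/l! \le \mu^{l}/l!$, where the last step uses the hypothesis $\sum_{i} p_{i} \le \mu$. Chaining the three inequalities gives the assertion.

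I do not expect a genuine obstacle; the argument is elementary once the sum is read correctly as ranging over unordered collections (which is what produces the $1/l!$). The only points requiring care are: (a) using \emph{mutual} rather than merely pairwise independence to factor the intersection probability into $\prod_{j} p_{i_{j}}$, which is exactly the condition built into the index set of the sum; and (b) justifying the manipulations when the index set is infinite, which is immediate since all $p_{i} \ge 0$, so each sum is a well-defined element of $[0,+\infty]$ and both the monotonicity step and the multinomial comparison hold termwise.
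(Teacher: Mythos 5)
Your proof is correct, and it is essentially the standard argument: the paper itself gives no proof of this lemma (it only cites the Erd\H{o}s--Tetali paper [6]), and the proof there proceeds exactly as you do --- factor the intersection probability using mutual independence, drop the independence restriction to bound the sum by the elementary symmetric function $e_{l}$, and compare with the multinomial expansion of $\left(\sum_{i} p_{i}\right)^{l}$ to get $e_{l} \le \mu^{l}/l!$. You also correctly handle the two delicate points: the sum must be read over unordered $l$-tuples (otherwise the bound would be $\mu^{l}$, not $\mu^{l}/l!$), and the nonnegativity of the $p_{i}$ justifies everything when the index set is infinite.
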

\noindent See [6] for the proof. We will use the following special case of Chernoff's inequality (Corollary 1.9. in
 [1]).
\begin{lem}
If $t_i$'s are independent Boolean random
 variables (i.e., every $t_{i} \in \{0,1\}$ 
and $X = t_1 + \dots{} + t_n$, then for any $\delta > 0$ we have 
\[
\mathbb{P}\big(|X - \mathbb{E}(X)| \ge \delta\mathbb{E}(X)\big) \le 2e^{-min(\delta^{2}/4, \delta/2)\mathbb{E}(X)}. 
\]
\end{lem}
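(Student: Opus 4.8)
The plan is to establish the upper and lower deviation estimates separately by the exponential moment (Chernoff) method and then add them, the summation producing the factor $2$. Write $p_i = \mathbb{P}(t_i = 1)$, so that $\mathbb{E}(t_i) = p_i$ and $\mu := \mathbb{E}(X) = \sum_{i=1}^{n} p_i$. The engine of the whole argument is the elementary inequality $\mathbb{E}(e^{\lambda t_i}) = 1 + p_i(e^{\lambda}-1) \le e^{p_i(e^{\lambda}-1)}$, valid for every real $\lambda$, which upon multiplying over $i$ and using the independence of the $t_i$ gives
\[
\mathbb{E}(e^{\lambda X}) = \prod_{i=1}^{n}\mathbb{E}(e^{\lambda t_i}) \le e^{(e^{\lambda}-1)\mu}.
\]

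For the upper tail I would fix $\delta > 0$, take $\lambda > 0$, and apply Markov's inequality to $e^{\lambda X}$:
\[
\mathbb{P}(X \ge (1+\delta)\mu) \le \frac{\mathbb{E}(e^{\lambda X})}{e^{\lambda(1+\delta)\mu}} \le \exp\bigl(\mu[(e^{\lambda}-1) - \lambda(1+\delta)]\bigr).
\]
The exponent is minimized at $\lambda = \ln(1+\delta)$, which collapses the bound to $\exp(-\mu\, g(\delta))$ with $g(\delta) = (1+\delta)\ln(1+\delta) - \delta$. For the lower tail I would instead apply Markov to $e^{-\lambda X}$ with $\lambda > 0$ and optimize at $\lambda = -\ln(1-\delta)$, obtaining, for $0 < \delta < 1$, the estimate $\mathbb{P}(X \le (1-\delta)\mu) \le \exp(-\mu\, L(\delta))$ with $L(\delta) = \delta + (1-\delta)\ln(1-\delta)$. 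When $\delta \ge 1$ the lower-tail event is either $\{X = 0\}$ (for $\delta = 1$) or empty (for $\delta > 1$), since $X \ge 0$; in the first case its probability is $\prod_i(1-p_i) \le e^{-\mu}$, which is already dominated by the desired bound.

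It then remains to compare $g$ and $L$ against the target exponent $\min(\delta^2/4,\delta/2)$, equal to $\delta^2/4$ on $(0,2]$ and to $\delta/2$ on $[2,\infty)$. For the lower tail a short calculus check gives $L(\delta) \ge \delta^2/2 \ge \delta^2/4$ on $(0,1)$: writing $\chi(\delta) = L(\delta) - \delta^2/2$, one has $\chi(0) = 0$, $\chi'(\delta) = -\ln(1-\delta) - \delta$ (which vanishes at $0$), and $\chi''(\delta) = \delta/(1-\delta) > 0$, so $\chi$ increases from $0$. For the upper tail I would verify $g(\delta) \ge \delta^2/4$ on $(0,2]$ and $g(\delta) \ge \delta/2$ on $[2,\infty)$ by the same monotonicity technique, using $g'(\delta) = \ln(1+\delta)$: on the first range the auxiliary function $g(\delta) - \delta^2/4$ has derivative $\ln(1+\delta) - \delta/2$, and on the second range $g(\delta) - \delta/2$ has derivative $\ln(1+\delta) - 1/2 > 0$ together with a positive value at $\delta = 2$.

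The only genuine work lies in these elementary inequalities, and the mildly delicate point is the upper-tail comparison on $(0,2]$: there $g(\delta) - \delta^2/4$ is \emph{not} globally convex, since its derivative $\ln(1+\delta) - \delta/2$ has a vanishing first derivative at $\delta = 1$, so one must check by hand that this derivative nonetheless stays positive throughout $(0,2]$ (it rises from $0$, peaks near $\delta = 1$, and is still positive, about $\ln 3 - 1$, at $\delta = 2$), which is exactly what guarantees that the two branches of the minimum match up correctly at the crossover $\delta = 2$. Once both tail bounds are in the form $\exp(-\min(\delta^2/4,\delta/2)\mu)$, adding them through
\[
\mathbb{P}(|X - \mu| \ge \delta\mu) = \mathbb{P}(X \ge (1+\delta)\mu) + \mathbb{P}(X \le (1-\delta)\mu)
\]
produces the factor $2$ and completes the proof.
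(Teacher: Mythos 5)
Your proof is correct, but there is nothing in the paper to compare it against: the paper states this lemma without proof, quoting it as Corollary 1.9 of Alon--Spencer [1], so what you have produced is a self-contained reconstruction of the standard Chernoff argument behind that citation. Your derivation is the canonical one and the details check out: $\mathbb{E}(e^{\lambda t_i}) = 1 + p_i(e^{\lambda}-1) \le e^{p_i(e^{\lambda}-1)}$ plus independence and Markov's inequality, optimized at $\lambda = \ln(1+\delta)$ for the upper tail and $\lambda = -\ln(1-\delta)$ for the lower tail, give the exponents $g(\delta) = (1+\delta)\ln(1+\delta)-\delta$ and $L(\delta) = \delta + (1-\delta)\ln(1-\delta)$; your monotonicity checks are sound, since $\chi''(\delta) = \delta/(1-\delta) > 0$ indeed yields $L(\delta) \ge \delta^2/2$ on $(0,1)$, and for the upper tail you rightly flag the delicate point that $\phi'(\delta) = \ln(1+\delta)-\delta/2$ peaks at $\delta = 1$ and must be checked to remain positive up to $\delta = 2$ (where it equals $\ln 3 - 1 > 0$), while $g(2) - 1 = 3\ln 3 - 3 > 0$ and $g'(\delta) = \ln(1+\delta) > 1/2$ handle $[2,\infty)$; the crossover of $\min(\delta^2/4, \delta/2)$ at $\delta = 2$ is matched correctly, and your disposal of the lower tail for $\delta \ge 1$ via $X \ge 0$ and $\mathbb{P}(X = 0) = \prod_i(1-p_i) \le e^{-\mu}$ is fine. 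Two pedantic points only: the final displayed identity should strictly be an inequality $\le$, or else you should remark that the two tail events are disjoint when $\mu > 0$, which is what makes it an equality; and the degenerate case $\mu = 0$ is best set aside at the outset, where the claimed bound holds trivially because the right-hand side is $2$. What your route buys over the paper's bare citation is that the provenance of the otherwise opaque constant $\min(\delta^2/4, \delta/2)$ becomes fully transparent; the cost is about a page of elementary calculus that the paper deliberately outsources.
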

Finally we need the following combinatorial result due
to Erd\H{o}s and Rado, see [4]. Let $r$ be a positive integer, $r \ge 3$. A
collection of sets $A_{1}, A_{2}, \dots{} A_{r}$ forms a $\Delta$ - system if
the sets have pairwise the same intersection.  

\begin{lem}
If $H$ is a collection of sets of size at most $k$ and $|H| > (r - 1)^{k}k!$
then $H$ contains $r$ sets forming a $\Delta$ - system.
\end{lem}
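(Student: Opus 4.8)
The plan is to argue by induction on $k$, the common bound on the sizes of the sets in $H$. For the base case $k=1$ I would note that the hypothesis $|H| > (r-1)\cdot 1!$ forces $H$ to contain at least $r$ distinct sets, each of size at most $1$; any two distinct sets of size at most $1$ meet in the empty set, so any $r$ of them already form a $\Delta$-system with empty core.

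For the inductive step I would fix a family $H$ of sets of size at most $k$ with $|H| > (r-1)^{k}k!$ and split into two cases according to whether $H$ contains $r$ pairwise disjoint sets. If it does, those sets constitute a $\Delta$-system whose common intersection is empty, and we are finished. If it does not, I would take a maximal pairwise-disjoint subfamily, which then has at most $r-1$ members, and let $Y$ be its union, so that $|Y| \le (r-1)k$. Maximality guarantees that every member of $H$ meets $Y$, for otherwise such a member could be adjoined to the subfamily; hence every set in $H$ contains one of the at most $(r-1)k$ elements of $Y$.

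A pigeonhole step then supplies a single element $y \in Y$ belonging to more than $(r-1)^{k}k!/((r-1)k) = (r-1)^{k-1}(k-1)!$ sets of $H$. Deleting $y$ from each of these members produces a family of sets of size at most $k-1$, still pairwise distinct since the originals were distinct and all contained $y$, and of cardinality exceeding the threshold $(r-1)^{k-1}(k-1)!$ demanded by the induction hypothesis. That hypothesis then yields $r$ of them forming a $\Delta$-system with some core $C$; re-inserting $y$ into each turns the core into $C \cup \{y\}$ while leaving the petals pairwise sharing exactly $C \cup \{y\}$, so we recover $r$ genuine members of the original $H$ forming a $\Delta$-system.

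The only delicate point is the bookkeeping of the counting bound: one must check that the loss incurred by dividing by $|Y| \le (r-1)k$ in the pigeonhole step is exactly compensated by the drop from level $k$ to level $k-1$ of the induction. This is guaranteed by the identity $(r-1)^{k}k! = (r-1)k \cdot (r-1)^{k-1}(k-1)!$, which shows that the stated bound is engineered precisely so the induction closes; this tailoring of the constant is the heart of the argument rather than any genuine obstacle.
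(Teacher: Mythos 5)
Your induction is correct and is precisely the classical Erd\H{o}s--Rado argument: the paper itself gives no proof of this lemma, merely citing Erd\H{o}s and Rado [4], and your route---maximal pairwise-disjoint subfamily, pigeonhole on its union $Y$ with $|Y| \le (r-1)k$, deletion of a popular element $y$, and the identity $(r-1)^{k}k! = (r-1)k \cdot (r-1)^{k-1}(k-1)!$ closing the induction---is exactly the proof in that source, with the strict inequality correctly preserved through the pigeonhole step. The only point worth a passing remark (and it is harmless) is the degenerate possibility $\emptyset \in H$, which either joins the empty-core $\Delta$-system directly or can be discarded at the outset.
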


\section{Proof of Theorem 2}

In the first step, we prove that for any random set $A$, if the expectation of 
$R_{A,h}(n)$ is small, then it is almost always bounded.

\begin{lem}
Let $2 \le h \le k$. Consider a random set $A \subset \mathbb{N}$ defined by 
$\alpha_{n} = \mathbb{P}(n\in A)$. If $\mathbb{E}(R_{A,l}(n)) \ll n^{-\varepsilon}$
for every $\varepsilon > 0$ and for every $2 \le l \le h$, then $R_{A,h}(n)$ is bounded with probability $1$.
\end{lem}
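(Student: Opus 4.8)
The plan is to prove, by induction on $l$ for $1\le l\le h$, the stronger assertion that $R_{A,l}(n)$ is bounded with probability $1$; the case $l=h$ is the lemma. The base case $l=1$ is immediate, since $R_{A,1}(n)=\varrho(A,n)\le 1$ for every $n$. For the inductive step I fix $2\le l\le h$ and an integer $r\ge 2$ (specified below), and I assume that with probability $1$ the quantity $g:=\sup_{m}R_{A,l-1}(m)$ is finite. The heart of the step is a deterministic matching--cover reduction. For a fixed $n$ I regard the length-$l$ representations of $n$ lying in $A$ as edges, each an $l$-element subset of $A$ summing to $n$, and let $\nu$ be the largest number of pairwise disjoint such representations. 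If $\nu\ge r$, then $n$ admits $r$ pairwise disjoint length-$l$ representations in $A$. Otherwise a maximal disjoint family has union $U$ with $|U|\le (r-1)l$, and by maximality $U$ meets every representation of $n$; by pigeonhole some $u\in U$ lies in at least $R_{A,l}(n)/((r-1)l)$ representations, and deleting $u$ from each of these yields that many length-$(l-1)$ representations of $n-u$ in $A$. Hence, whenever $\nu<r$,
\[
R_{A,l}(n)\le (r-1)l\,R_{A,l-1}(n-u)\le (r-1)l\,g .
\]

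It remains to control the event $\nu\ge r$. Let $M_{r,l}(n)$ denote the number of unordered $r$-tuples of pairwise disjoint length-$l$ representations of $n$ contained in $A$. I would apply Lemma 3 to the events $B_{S}=\{S\subseteq A\}$ indexed by the length-$l$ representations $S$ of $n$: here $\sum_{S}\mathbb{P}(B_{S})=\mathbb{E}(R_{A,l}(n))$, and disjoint $S$ yield independent $B_{S}$ (they depend on disjoint blocks of the independent events $E^{(a)}$), so the disjointness lemma gives
\[
\mathbb{E}\big(M_{r,l}(n)\big)\le \frac{\mathbb{E}\big(R_{A,l}(n)\big)^{r}}{r!}.
\]
Since $\mathbb{E}(R_{A,l}(n))\ll n^{-\varepsilon}$ for every $\varepsilon>0$, I choose $\varepsilon>0$ with $r\varepsilon>1$, so that $\sum_{n}\mathbb{E}(M_{r,l}(n))\ll\sum_{n}n^{-r\varepsilon}<\infty$; because $\mathbb{P}(M_{r,l}(n)\ge 1)\le \mathbb{E}(M_{r,l}(n))$, Lemma 2 shows that with probability $1$ only finitely many $n$ admit $r$ pairwise disjoint length-$l$ representations. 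Working on the probability-$1$ event on which both $g<\infty$ and this finiteness hold, I let $N$ bound the exceptional $n$. Then $\nu<r$ for every $n>N$, so the displayed inequality forces $R_{A,l}(n)\le (r-1)l\,g$, and together with the finitely many $n\le N$ this gives $\sup_{n}R_{A,l}(n)<\infty$, closing the induction.

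The delicate point is summability, and it is worth flagging where the naive approaches break. A direct first- or higher-moment bound on $\mathbb{P}(R_{A,l}(n)\ge t)$ is \emph{not} summable in $n$ for any fixed threshold $t$: peeling off a single summand $u$, or equivalently a sunflower core, reintroduces an essentially free sum $\sum_{u}\alpha_{u}$, which diverges since $A$ is infinite, and no choice of $t$ repairs this term by term. The bootstrap above is engineered precisely to sidestep the free summand: instead of bounding the probability that $R_{A,l}(n)$ is large, I use the inductive almost-sure bound at level $l-1$ to convert ``$R_{A,l}(n)$ large'' into ``$n$ has $r$ pairwise disjoint representations'', and only this latter event---whose $r$ full representations pin all $rl$ summands to the single value $n$---carries a summable probability via Lemma 3. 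Consequently, the steps I expect to need the most care are the matching--cover reduction (maximality yields a cover, and pigeonhole yields a high-multiplicity summand) and the disjointness bookkeeping required to invoke Lemma 3.
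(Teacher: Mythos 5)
Your proof is correct, but the second half takes a genuinely different route from the paper. The probabilistic engine is the same in both arguments: you index the events $B_{S}=\{S\subseteq A\}$ by the length-$l$ representations $S$ of $n$, note that pairwise disjoint representations give independent events, apply the Erd\H{o}s--Tetali disjointness lemma (Lemma 3) to bound the expected number of $r$-tuples of disjoint representations by $\mathbb{E}(R_{A,l}(n))^{r}/r!\ll n^{-r\varepsilon}$, and conclude via Borel--Cantelli that disjoint families of size $r$ occur for only finitely many $n$ --- this is exactly the paper's bound $\mathbb{P}(f_{l}(n)>G)\le \frac{1}{(G+1)!}(\mathbb{E}(R_{A,l}(n)))^{G+1}\ll n^{-1-\varepsilon}$ with $G=[1/\varepsilon]$. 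Where you diverge is the deterministic step converting bounded disjoint families into a bounded representation function. The paper invokes the Erd\H{o}s--Rado sunflower lemma (Lemma 5): if $R_{A,h}(m)>(c_{\max})^{h}h!$, it extracts a $\Delta$-system of $c_{\max}+1$ representations and, according to whether the core is empty or has $r$ elements of sum $t$, contradicts the disjointness bound at level $h$ or at level $h-r$ (applied to $m-t$), handling all levels in one non-inductive stroke and yielding the explicit bound $(c_{\max})^{h}h!$. You instead induct on $l$ and use the elementary matching--cover observation: if fewer than $r$ pairwise disjoint representations exist, the union $U$ of a maximal disjoint family, of size at most $(r-1)l$, meets every representation, so by pigeonhole some $u\in U$ lies in at least $R_{A,l}(n)/((r-1)l)$ of them, giving $R_{A,l}(n)\le (r-1)l\,R_{A,l-1}(n-u)$ and closing the step via the inductive almost-sure bound at level $l-1$. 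Your route avoids Erd\H{o}s--Rado entirely and makes explicit why the hypothesis for all $2\le l\le h$ is needed (each inductive level consumes one instance of it), at the cost of a chain of random constants accumulating through the induction; the paper's route buys a single clean constant but imports a nontrivial combinatorial lemma. Two trivial remarks: you never actually need to ``specify $r$ below'' --- any fixed $r\ge 2$ together with $\varepsilon=1$ in the hypothesis already makes $\sum_{n}n^{-r\varepsilon}$ converge --- and your closing discussion of why naive moment bounds fail to be summable is accurate and mirrors the reason both proofs route everything through pairwise disjoint families, whose $rl$ summands are all pinned to the single value $n$.
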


\begin{proof}
We show similarly as in [6] that with probability $1$, $R_{A,h}(n)$ is bounded by a 
constant. For each representation $a_{1} + \dots{} + a_{h} = n$, $a_{1} < \dots{} < a_{h}$, $a_{1}, \dots{} ,a_{h} \in A$ we assign a set 
$S = \{a_{1}, \dots{} ,a_{h}\}$. We say two representations 
$a_{1} + \dots{} + a_{h} = b_{1} + \dots{} + b_{h} = n$ are disjoint if the assigned sets $S_{1} = \{a_{1}, \dots{} ,a_{h}\}$ and $S_{2} = \{b_{1}, \dots{} ,b_{h}\}$ 
are disjoint.

For $2 \le l \le h$ and a set of positive integers $B$, 
let $f_{B,l}(n) = f_{l}(n)$ denote the  
maximum number of pairwise disjoint representations of $n$ as the sum of $l$ distinct terms from $B$. Let 
\[
\mathcal{B} = \{(a_{1}, \dots{}, a_{l}): a_{1} + \dots{} + a_{l} = n, a_{1}
\in A, \dots{}, a_{l} \in A, 1 \le a_{1} < \dots{} < a_{l} < n\},
\]
\noindent and let $H(\mathcal{B}) = \{\mathcal{T} \subset \mathcal{B}$: 
all the $S \in \mathcal{T}$ are pairwise disjoint$\}$. 
\noindent It is clear that the pairwise disjointness of the sets implies the 
independence of the associated events, i.e., if $S_1$ and $S_2$ are pairwise
disjoint representations, the events $S_1 \subset A$, $S_2 \subset A$ 
are independent. On the other hand, for a fixed $2 \le l \le h$, let $E_{n}$ denote the event
\[
E_{n} = \{A: A \in \Omega, f_{A,l}(n) > G\}
\]
\noindent for some $G$ and write 
\[
\mathcal{F} = \Omega \setminus
\bigcap_{i=1}^{+\infty}\Big(\bigcup_{n=i}^{+\infty}E_{n}\Big). 
\]
\noindent As a result, we see that $A \in \mathcal{F}$ if and only if there exists a number $n_{1} = n_{1}(A)$ such that we have 
\[
f_{A,l}(n) \le G \hspace*{4mm} for \hspace*{4mm} n \ge n_{1}.
\]
\noindent We will prove that $\mathbb{P}(\mathcal{F}) = 1$ if $G$ is large 
enough. Directly from Lemma 3, for $G = \Big[\frac{1}{\varepsilon}\Big]$ with any $\varepsilon > 0$, we have 
\[
\mathbb{P}(f_{l}(n) > G) \le \mathbb{P}\Big(\bigcup_{\overset{\mathcal{T} \subset
    H(\mathcal{B})}{|\mathcal{T}| = G+1}}\bigcap_{S \in \mathcal{T}}S\Big) \le
\sum_{\overset{\mathcal{T} \subset H(\mathcal{B})}{|\mathcal{T}| = G+1}}\mathbb{P}\Big(\bigcap_{S \in \mathcal{T}}S\Big)
\]
\[
= \sum_{\overset{(S_{1}, \dots{} ,S_{G+1})}{Pairwise \atop disjoint}}\mathbb{P}(S_{1}
\cap \dots{} \cap S_{G+1}) \le \frac{1}{(G+1)!}(\mathbb{E}(f_{l}(n)))^{G+1} \le
\frac{1}{(G+1)!}(\mathbb{E}(R_{A,l}(n)))^{G+1} 
\]
\[
\ll n^{-(G+1)\varepsilon} \ll n^{-1-\varepsilon}.
\]
Using the Borel - Cantelli lemma, it follows that with probability 1, for  $2 \le l \le h$ there exists  an $n_{l}$ such that 
\[
f_{l}(n) \le G \hspace*{2mm} for \hspace*{2mm} n > n_{l}.
\]
On the other hand, for any finite $n_{l}$, there are at most a finite number
of representations as a sum of $l$ terms. Therefore, almost always
for $2 \le l \le h$ there exists a $c_{l}$ such that for every $n$,
$f_{l}(n) < c_{l}$. Set $c_{max} = max_{l}\{c_{l}\}$. 
Now we show similarly as in [6] that almost always there exists a constant $c
= c(A)$ such that for every $n$, $R_{A,h}(n) < c$.
Suppose that for some positive integer $m$, 
\[
R_{A,h}(m) > (c_{max})^{h}h! 
\]
with positive probability. Let $H$ be the set of
representations of $m$ as the sum of $h$ distinct terms from
$A$. Then $|H| = R_{A,h}(m) > (c_{max})^{h}h$, thus by Lemma 5,
$H$ contains a $\Delta$ - system $\{S_{1}, \dots{} ,S_{c_{max} +1}\}$.    
If $S_{1} \cap \dots{} \cap S_{c_{max} +1} = \emptyset$, then $S_{1}, \dots{} ,S_{c_{max} +1}$ form a family of disjoint representations of $m$ as the sum of $h$ terms, which contradicts the definition of $c_{max}$. 
Otherwise let the $S_{1} \cap \dots{} \cap S_{c_{max} +1}  = \{x_{1}, \dots{} ,x_{r}\} = S$, where $0 < r < h - 1$. If $\sum_{i=1}^{r}x_{i} = t$,
then $S_{1}\setminus S, \dots{} ,S_{c_{max} +1}\setminus S$ form a family of disjoint representations of $m - t$ as the sum of $h - r$ terms. It follows that
$f_{h-r}(m-t) \ge c_{max} + 1 > c_{h-r}$, which is impossible because of the definition of $c_{max}$. As a result, we see that $R_{A,h}(m) \le (c_{max})^{h}h!$, which implies that $R_{A,h}(n)$ is bounded with probability 1.
\end{proof}

\begin{rem}
It follows from the proof of Lemma 6 that the representation function $R_{A,h}(n)$ is bounded if and only if $f_{l}(n)$ is bounded for every $2 \le l \le h$. 
\end{rem}

\begin{lem}
Consider a random set $A \subset \mathbb{N}$ defined by 
$\alpha_{n} = \mathbb{P}(n\in A)$. If 
\[
\lim_{x \rightarrow \infty}\frac{\mathbb{E}(A(x))}{\log x} = +\infty, 
\]
then $A(x) \sim \mathbb{E}(A(x))$, with probability $1$.
\end{lem}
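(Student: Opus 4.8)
The plan is to use that, by Lemma 1, the indicators $\varrho(A,j)$ are independent Boolean random variables, so by $(1)$ the counting function $A(x)=\sum_{j=1}^{x}\varrho(A,j)$ is exactly a sum of independent Boolean variables. Hence Chernoff's inequality (Lemma 4) applies directly with $X=A(x)$ and $\mathbb{E}(X)=\mathbb{E}(A(x))$. Writing $E(x)=\mathbb{E}(A(x))$, fixing $\delta>0$ and setting $\beta=\min(\delta^{2}/4,\delta/2)>0$, Lemma 4 yields
\[
\mathbb{P}\big(|A(x)-E(x)|\ge \delta E(x)\big)\le 2e^{-\beta E(x)}
\]
for every positive integer $x$.

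The crucial point is that the hypothesis $E(x)/\log x\to+\infty$ makes these probabilities summable over \emph{all} integers $x$, so that, unlike in a sum with only a constant lower bound on $E(x)$, no auxiliary thinning to a subsequence is needed. Indeed, for the fixed $\delta$ the quantity $\beta$ is a fixed positive constant, and since $E(x)/\log x\to+\infty$ beats any fixed multiple of $\log x$, choosing the multiple $2/\beta$ gives an $x_{0}$ (depending only on $\delta$, hence deterministic) with $\beta E(x)>2\log x$, and thus $e^{-\beta E(x)}<x^{-2}$, for all $x\ge x_{0}$. Therefore $\sum_{x}\mathbb{P}\big(|A(x)-E(x)|\ge\delta E(x)\big)<+\infty$, and the Borel--Cantelli lemma (Lemma 2) shows that, for this fixed $\delta$, with probability $1$ only finitely many of the events $\{|A(x)-E(x)|\ge\delta E(x)\}$ occur.

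To upgrade this to $A(x)\sim E(x)$ I would apply the previous paragraph to the countable family $\delta=1/m$, $m=1,2,\dots$, and intersect the resulting probability-$1$ events (a countable intersection of full-measure sets still has measure $1$). On this intersection, for each $m$ there is a random threshold beyond which $|A(x)/E(x)-1|<1/m$, which forces $A(x)/E(x)\to 1$, i.e.\ $A(x)\sim\mathbb{E}(A(x))$ almost surely. Finally, since $A(x)$ and $E(x)$ are both constant on each interval $[n,n+1)$, controlling integer $x$ controls all real $x$. The step requiring the most care is the summability verification: one must check that the threshold $x_{0}$ depends only on $\delta$ and not on the random set $A$, so that Borel--Cantelli is legitimately applied to the full deterministic sequence of events, and that the finitely many initial terms do not affect convergence of the tail dominated by $\sum x^{-2}$. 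Everything else is a routine combination of Chernoff and Borel--Cantelli.
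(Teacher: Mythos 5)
Your proof is correct and follows essentially the same route as the paper: Chernoff's inequality (Lemma 4) applied to $A(x)$ as a sum of independent Boolean indicator variables via $(1)$, followed by Borel--Cantelli (Lemma 2). The only difference is cosmetic: the paper uses a single $x$-dependent $\delta(x)=\sqrt{8\log x/\mathbb{E}(A(x))}\to 0$ (which makes the Chernoff bound exactly $2/x^{2}$ and yields the asymptotic in one Borel--Cantelli application, with the quantitative error $|A(x)-\mathbb{E}(A(x))|\le\sqrt{8\,\mathbb{E}(A(x))\log x}$ as a bonus), whereas you fix $\delta=1/m$ and intersect countably many almost-sure events, which is equally valid.
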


\begin{proof}
It is clear from (1) that $A(x)$ is the sum of independent Boolean random
 variables. Note that $\delta < 2$ if $x$ is large enough, thus it follows from Lemma 4 with 
\[
\delta = \sqrt{\frac{8\log x}{\mathbb{E}(A(x))}}
\]
that  
\[
\mathbb{P}\left(|A(x) - \mathbb{E}(A(x))| \ge \sqrt{\frac{8\log x}{\mathbb{E}(A(x))}}\cdot \mathbb{E}(A(x))\right) \le 2\exp\left(-\frac{1}{4}\cdot \frac{8\log x}{\mathbb{E}(A(x))}\cdot \mathbb{E}(A(x)) \right) = \frac{2}{x^{2}}.   
\]
Since $\sum_{x=1}^{\infty}\frac{2}{x^{2}}$ converges, 
by the Borel - Cantelli lemma we have
\[
|A(x) - \mathbb{E}(A(x))| < \frac{8\log x}{\mathbb{E}(A(x))}\cdot \mathbb{E}(A(x))
\]
with probability $1$, for every $x$ large enough. Since
\[
\sqrt{\frac{8\log x}{\mathbb{E}(A(x))}} = o(1),
\]
the statement of Lemma 7 follows.
\end{proof}

Now we are ready to prove Theorem 2. In the first step, we show that 
for every $2 \le l \le h$ and $0 < \kappa < \frac{1}{k}$ there exists an $n_{0}(\kappa,l)$ such that 
\[
R_{(\mathbb{Z}^{+})^{k},l}(n) < n^{\kappa}
\]
for every $n > n_{0}(\kappa,l)$. We prove by contradiction. Suppose that there exist a constant $c > 0$ and a $0 < \kappa < \frac{1}{k}$ such that
\[
R_{(\mathbb{Z}^{+})^{k},l}(n) > cn^{\kappa}
\]
for infinitely many $n$. Pick a large $n$ and consider different representations $n = a^{k}_{1,1} + a^{k}_{1,2} + \dots{} + a^{k}_{1,l}, n = a^{k}_{2,1} + a^{k}_{2,2} + \dots{} + a^{k}_{2,l}, \dots{} 
,n = a^{k}_{u,1} + a^{k}_{u,2} + \dots{} + a^{k}_{u,l}$ where $a_{i,1} < a_{i,2} < \dots{} < a_{i,l}$ positive integers for every $1 \le i \le u$, where $cn^{\kappa} < u < \frac{1}{2}n^{1/k}$. 
Then there exist $1 \le b_{1} < b_{2} < \dots{} < b_{h-l} \le n^{1/k}$ positive integers such that $b_{v} \ne a_{i,j}$ for every $1 \le v \le h-l$ and $1 \le i \le u$, $1 \le j \le l$. 
Then we have
\[
R_{(\mathbb{Z}^{+})^{k},h}(n+b^{k}_{1} + \dots{} + b^{k}_{h-l}) \ge cn^{\kappa} = \frac{c}{h^{\kappa}}(nh)^{\kappa} \ge \frac{c}{h^{\kappa}}(n+b^{k}_{1} + \dots{} + b^{k}_{h-l})^{\kappa}.
\]
If we denote $m = n + b^{k}_{1} + \dots{} + b^{k}_{h-l}$, then $R_{(\mathbb{Z}^{+})^{k},h}(m) > \frac{c}{h^{\kappa}}m^{\kappa}$ for infinitely many $m$. It follows that there exist infinitely many $m$ such that
\[
R_{(\mathbb{Z}^{+})^{k},h}(m) > m^{\kappa/2}.
\]
In view of the hypothesis in Theorem 2 we get a contradisction. 

\indent Next, for an $\varepsilon > 0$, we define the random set $A$ by 
\[
\mathbb{P}(n \in A) = 
\left\{
\begin{aligned}
\frac{1}{n^{\varepsilon}} \textnormal{, if } n \in (\mathbb{Z}^{+})^{k}\\ 
0 \textnormal{, if } n \notin (\mathbb{Z}^{+})^{k}.
\end{aligned} \hspace*{3mm}
\right.
\]
Then, in view of (2) and $x_{l} \ge \frac{n}{l}$, for every $2 \le l \le h$ we 
have
\[
\mathbb{E}(R_{A,l}(n)) = \sum_{\overset{(x_{1}, \dots{} ,x_{l})\in (\mathbb{Z}^{+})^{k}}{1 \le x_{1} < \dots{} < x_{l}} 
\atop {x_{1} + \dots{} + x_{l} = n}}\mathbb{P}(x_{1}, \dots{}, x_{l} \in A) = \sum_{\overset{(x_{1}, \dots{} ,x_{l})\in (\mathbb{Z}^{+})^{k}}{1 \le x_{1} < \dots{} < x_{l}} 
\atop {x_{1} + \dots{} + x_{l} = n}}\frac{1}{(x_{1}\dots{} x_{l})^{\varepsilon}} 
\]
\[
\ll \frac{1}{n^{\varepsilon}}\cdot R_{(\mathbb{Z}^{+})^{k},l}(n) \ll \frac{1}{n^{\varepsilon}}\cdot n^{\varepsilon/2} = n^{-\varepsilon/2}.
\]
It follows from Lemma 6 that $R_{A,h}(n)$ is almost always bounded. 
In the next step, we prove that $A$ is as dense as desired.
Applying the Euler-Maclaurin integral formula,
\[
\mathbb{E}(A(x)) = \sum_{m \le x^{1/k}}\frac{1}{(m^{k})^{\varepsilon}} = \int_{0}^{x^{1/k}}t^{-k\varepsilon}dt + O(1) = \frac{1}{1-k\varepsilon}x^{\frac{1}{k}-\varepsilon} + O(1).
\] 
By Lemma 7, assuming $\varepsilon < \frac{1}{k}$ we get that
\[
A(x) \gg x^{\frac{1}{k}-\varepsilon}
\]
with probability 1. The proof of Theorem 2 is completed.

\section{Proof of Theorem 3}

As $h > k$, we define the random set $A$ by
\[
\mathbb{P}(n \in A) = 
\left\{
\begin{aligned}
\frac{1}{n^{\frac{1}{k}-\frac{1}{h}+\varepsilon}} \textnormal{, if } n \in (\mathbb{Z}^{+})^{k}\\ 
0 \textnormal{, if } n \notin (\mathbb{Z}^{+})^{k}.
\end{aligned} \hspace*{3mm}
\right.
\]
\noindent First, for every $2 \le l \le h$, we give an upper estimation to $\mathbb{E}(R_{A,l}(n))$, where
\begin{equation}
\mathbb{E}(R_{A,l}(n)) = \sum_{\overset{(x_{1}, \dots{} ,x_{l})\in (\mathbb{Z}^{+})^{k}}{1 \le x_{1} < \dots{} < x_{l}} 
\atop {x_{1} + \dots{} + x_{l} = n}}\mathbb{P}(x_{1}, \dots{}, x_{l} \in A).
\end{equation}
We prove that there exists $h_{1}(k)$ such that for $h \ge h_{1}(k)$, $l \le h$, we have
\[
\sum_{\overset{(x_{1}, \dots{} ,x_{l})\in (\mathbb{Z}^{+})^{k}}{1 \le x_{1} < \dots{} < x_{l}} 
\atop {x_{1} + \dots{} + x_{l} = n}}\mathbb{P}(x_{1}, \dots{}, x_{l} \in A) \ll n^{-\varepsilon}.
\] 
Assume that $l \le \frac{h}{k}$. Then we have
\[
\sum_{\overset{(x_{1}, \dots{} ,x_{l})\in (\mathbb{Z}^{+})^{k}}{1 \le x_{1} < \dots{} < x_{l}} \atop{x_{1} + \dots{} + x_{l} = n}}\mathbb{P}(x_{1}, \dots{}, x_{l} \in A) = \sum_{\overset{(x_{1}, \dots{} ,x_{l})\in (\mathbb{Z}^{+})^{k}}{1 \le x_{1} < \dots{} < x_{l}}\atop {x_{1} + \dots{} + x_{l} = n}}\frac{1}{(x_{1} \cdots{} x_{l})^{\frac{1}{k}-\frac{1}{h}+\varepsilon}}.
\]
Since $x_{l} \ge \frac{n}{l}$, we may therefore calculate
\[
\sum_{\overset{(x_{1}, \dots{} ,x_{l})\in (\mathbb{Z}^{+})^{k}}{1 \le x_{1} < \dots{} < x_{l}}\atop {x_{1} + \dots{} + x_{l} = n}}\frac{1}{(x_{1} \cdots{} x_{l})^{\frac{1}{k}-\frac{1}{h}+\varepsilon}} \ll
n^{-\left(\frac{1}{k}-\frac{1}{h}+\varepsilon\right)}\left(\sum_{x=1}^{n^{1/k}}
\frac{1}{x^{k(\frac{1}{k}-\frac{1}{h}+\varepsilon)}}\right)^{l-1} 
\]
\[
= n^{-\frac{1}{k}+\frac{1}{h}-\varepsilon}\left(\sum_{x=1}^{n^{1/k}}\frac{1}{x^{1-\frac{k}{h}+k\varepsilon}}\right)^{l-1}. 
\]
Then, on applying the assumption $\frac{l}{h} \le \frac{1}{k}$, we find via Euler-Maclaurin integral formula that 
\[
\sum_{\overset{(x_{1}, \dots{} ,x_{l})\in (\mathbb{Z}^{+})^{k}}{1 \le x_{1} < \dots{} < x_{l}}\atop {x_{1} + \dots{} + x_{l} = n}}\frac{1}{(x_{1} \cdots{} x_{l})^{\frac{1}{k}-\frac{1}{h}+\varepsilon}}
\ll n^{-\frac{1}{k}+\frac{1}{h}-\varepsilon+(l-1)(\frac{k}{h}-k\varepsilon)\frac{1}{k}} = n^{-\frac{1}{k}+\frac{1}{h}-\varepsilon+(l-1)(\frac{1}{h}-\varepsilon)} 
\]
\[
= n^{-\frac{1}{k}+\frac{1}{h}-\varepsilon+\frac{l}{h}-\frac{1}{h}+\varepsilon-l\varepsilon} 
\ll n^{-l\varepsilon} \ll n^{-\varepsilon}.  
\]
Now we assume that $\frac{h}{k} < l \le h$. Then we have 
\[
\sum_{\overset{(x_{1}, \dots{} ,x_{l})\in (\mathbb{Z}^{+})^{k}}{1 \le < x_{1} < \dots{} < x_{l}}\atop {x_{1} + \dots{} + x_{l} = n}}\mathbb{P}(x_{1}, \dots{}, x_{l} \in A) 
= \sum_{\overset{(x_{1}, \dots{} ,x_{l})\in (\mathbb{Z}^{+})^{k}}{1 \le < x_{1} < \dots{} < x_{l}}\atop {x_{1} + \dots{} + x_{l} = n}}\frac{1}{(x_{1} \cdots{} x_{l})^{\frac{1}{k}-\frac{1}{h}+\varepsilon}}
\]
\[
\le \sum_{\overset{(x_{1}, \dots{} ,x_{l})\in (\mathbb{Z}^{+})^{k}}{1 \le < x_{1} < \dots{} < x_{l}}\atop {x_{1} + \dots{} + x_{l} = n}}\frac{1}{(x_{1} \cdots{} x_{l})^{\frac{1}{k}-\frac{1}{l}+\varepsilon}}
= \sum_{\overset{(y_{1}, \dots{} ,y_{l})\in (\mathbb{Z}^{+})^{l}}{1 \le < y_{1} < \dots{} < y_{l}}\atop {y_{1}^{k} + \dots{} + y_{l}^{k} = n}}\frac{1}{(y_{1} \cdots{} y_{l})^{1-\frac{k}{l}+k\varepsilon}}.
\]
We need the following lemma, which is a weaker version of a lemma of Vu [17, Lemma 2.1].

\begin{lem} For a fixed $k \ge 2$ there exists a constant 
$h_{2}(k) = O(8^{k}k^{2})$ such that for any $l \ge h_{2}(k)$ and for every $P_{1}, \dots{}, P_{l} \in \mathbb{Z}^{+}$ we have
\[
|\{(y_{1}, \dots{} ,y_{l}):  y_{i} \in \mathbb{Z}^{+}, y_{i} \le P_{i}, y_{1}^{k} + \dots{} + y_{l}^{k} = n\}| \ll \frac{1}{n}\prod_{i=1}^{l}P_{i} + \left(\prod_{i=1}^{l}P_{i}\right)^{1-\frac{k}{l}}.
\]
\end{lem}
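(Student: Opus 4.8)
The plan is to estimate the number of representations on the left by the Hardy--Littlewood circle method. Write $e(\theta)=e^{2\pi i\theta}$ and, for $1\le i\le l$, put $f_i(\alpha)=\sum_{1\le y\le P_i}e(\alpha y^{k})$ and $\mathcal P=\prod_{i=1}^{l}P_i$, so that the quantity to be bounded is exactly
\[
N=\int_{0}^{1}\prod_{i=1}^{l}f_i(\alpha)\,e(-n\alpha)\,d\alpha .
\]
It then suffices to dissect $[0,1)$ into major arcs $\mathfrak M$ (small neighbourhoods of rationals $a/q$ with bounded denominator) and minor arcs $\mathfrak m=[0,1)\setminus\mathfrak M$, and to bound each piece by the right-hand side. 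I expect both terms $\tfrac{1}{n}\mathcal P$ and $\mathcal P^{1-k/l}$ to appear already from the major arcs, with the minor arcs contributing only an error of size $\mathcal P^{1-k/l}$.

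On $\mathfrak M$ I would use the familiar approximation $f_i(\alpha)\approx q^{-1}S(a,q)\,v_i(\beta)$ for $\alpha=a/q+\beta$, where $S(a,q)=\sum_{r=1}^{q}e(ar^{k}/q)$ and $v_i(\beta)=\int_{0}^{P_i}e(\beta t^{k})\,dt$, and reduce the contribution to a product of a singular series and a singular integral. The singular series is $O(1)$ uniformly in $n$ once $l$ exceeds a bound of order $k$, while the singular integral equals $\mathcal P$ times the value at $n$ of the density $w$ of the push-forward of Lebesgue measure on $[0,1]^{l}$ under $(s_i)\mapsto\sum_{i}P_i^{k}s_i^{k}$. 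A scaling argument for small $n$, combined with the anti-concentration bound $\|w\|_{\infty}\ll\big(\sum_i P_i^{2k}\big)^{-1/2}\ll\mathcal P^{-k/l}$ (the last inequality by the arithmetic--geometric mean inequality), gives $w(n)\ll\tfrac1n+\mathcal P^{-k/l}$ and hence a major-arc contribution $\ll\tfrac1n\mathcal P+\mathcal P^{1-k/l}$; convergence of the singular integral here is what first requires $l>k$.

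The minor arcs are the crux of the matter. Here I would combine Weyl's inequality, which saves a factor $P_i^{-2^{1-k}+o(1)}$ off the trivial bound for every $f_i$ whose $P_i$ is large compared with the relevant modulus, with Hua's mean-value inequality for the higher $L^{p}$-norms of the $f_i$, and then apply H\"older's inequality across the $l$ factors to convert the pointwise saving into the exponent $1-k/l$. Two difficulties dominate the work. First, the $P_i$ may live at widely different scales, so there is no single natural modulus for the dissection: one must group the variables dyadically according to the size of $P_i$, discard the small variables (for which Weyl gives no saving) and extract the whole saving from the large ones. Second, one must weigh the Weyl saving $2^{1-k}$ against the number of variables and leave enough slack in the exponent $1-k/l$ both to absorb the $o(1)$-losses from Weyl and Hua and to swallow the logarithmic overheads; it is precisely this accounting, rather than any single inequality, that forces $l$ to be as large as $h_{2}(k)=O(8^{k}k^{2})$, and I expect it to be the most delicate part of the argument.
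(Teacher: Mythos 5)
First, be aware that the paper contains no proof of this lemma: it is introduced with the remark that it ``is a weaker version of a lemma of Vu [17, Lemma 2.1]'' and is then used as a black box, so the in-paper ``proof'' is a citation. Your outline is therefore not an alternative to the paper's argument but an attempt to reconstruct the proof of the quoted result; Vu's bound is indeed established by exponential-sum/circle-method technology of roughly the shape you describe, so the plan points in the right direction.

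As a proof, however, your text has genuine gaps, and they are exactly the steps you defer with ``I would'' and ``I expect''. The bound must be uniform in completely arbitrary $P_{1},\dots,P_{l}$, and when these sit at wildly different scales no single dissection of $[0,1)$ is simultaneously major or minor for all the factors $f_{i}$: a frequency $\alpha$ that is minor on the scale of the largest $P_{i}$ can be essentially rational on the scale of the smallest, so Weyl's inequality yields a saving in only some factors, while your plan to ``discard the small variables'' concedes their full trivial contribution even though the target saving $\left(\prod_{i=1}^{l}P_{i}\right)^{k/l}$ involves every $P_{i}$; verifying that the large variables can always pay the whole bill is the actual content of the lemma, and neither this nor the bookkeeping that converts the Weyl saving $2^{1-k}$ into the threshold $h_{2}(k)=O(8^{k}k^{2})$ is carried out. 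The major arcs suffer from the same disease: the approximation $f_{i}(\alpha)\approx q^{-1}S(a,q)v_{i}(\beta)$ is valid only when $q$ and $|\beta|$ are small relative to the scale of $P_{i}$, which cannot hold for all $i$ at once when the scales differ, so the clean singular-series-times-singular-integral factorization you assume does not come for free; and the anti-concentration bound $\|w\|_{\infty}\ll\left(\sum_{i}P_{i}^{2k}\right)^{-1/2}$ is asserted without argument even though each summand $P_{i}^{k}s_{i}^{k}$ has density $\frac{1}{k}u^{1/k-1}P_{i}^{-1}$, unbounded near $0$, so boundedness of $w$ (let alone the stated sup bound with the correct dependence on all the scales) emerges only after convolving several factors and requires proof. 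None of this is a wrong idea --- it is in essence the program behind Vu's lemma --- but as submitted the two steps on which the statement turns are flagged rather than proved; the economical course, and the paper's own, is to invoke [17, Lemma 2.1].
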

By Lemma 8 one has 
\[
\sum_{\overset{(y_{1}, \dots{} ,y_{l})\in (\mathbb{Z}^{+})^{l}}{\frac{P_{i}}{2} < y_{i} \le P_{i}}\atop {y_{1}^{k} + \dots{} + y_{l}^{k} = n}}\frac{1}{(y_{1} \cdots{} y_{l})^{1-\frac{k}{l}+k\varepsilon}} \ll
\left(\frac{1}{n}\prod_{i=1}^{l}P_{i} + \left(\prod_{i=1}^{l}P_{i}\right)^{1-\frac{k}{l}}\right)\left(\prod_{i=1}^{l}P_{i}\right)^{-1+\frac{k}{l}-k\varepsilon}
\]
\[
\ll \frac{1}{n}\left(\prod_{i=1}^{l}P_{i}\right)^{\frac{k}{l}-k\varepsilon} + \left(\prod_{i=1}^{l}P_{i}\right)^{-k\varepsilon}. 
\]
Let $(P_{1}, \dots{} ,P_{l}) = (2^{i_{1}}, \dots{} ,2^{i_{l}})$, where $0 \le i_{1} \le i_{2} \le \dots{} \le i_{l}$. If $1 \le y_{1} < y_{2} < \dots{} < y_{l}$, $\sum_{i=1}^{l}y_{i}^{k} = n$, then
obviously
\[
\left(\frac{n}{l}\right)^{1/k} \le y_{l} \le n^{1/k}.
\]
Consequently
\begin{equation} 
i_{l} \le \frac{1}{k}\log_{2}n + 1.
\end{equation}
Then, by Lemma 8 we have 
\[
Q = \sum_{\overset{(i_{1}, \dots{} ,i_{l})}{0 \le i_{1} \le \dots{} \le i_{l}}}
\sum_{\overset{(y_{1}, \dots{} ,y_{l})}{1 \le < y_{1} < \dots{} < y_{l}}\atop{{2^{i_{j}-1} < y_{j} \le 2^{i_{j}}}\atop {y_{1}^{k} + \dots{} + y_{l}^{k} = n}}}\frac{1}{(y_{1} \cdots{} y_{l})^{1-\frac{k}{l}+k\varepsilon}} 
\ll \sum_{\overset{(i_{1}, \dots{} ,i_{l})}{0 \le i_{1} \le \dots{} \le i_{l}}}\frac{1}{n}\left(\prod_{i=1}^{l}2^{i_{j}}\right)^{\frac{k}{l}-k\varepsilon} + \sum_{\overset{(i_{1}, \dots{} ,i_{l})}{0 \le i_{1} \le \dots{} \le i_{l}}}\left(\prod_{i=1}^{l}2^{i_{j}}\right)^{-k\varepsilon} 
\]
\[
= Q_{1} + Q_{2}.
\]
In the first step, we estimate $Q_{1}$. By using (4), we have
\[
Q_{1} = \sum_{\overset{(i_{1}, \dots{} ,i_{l})}{0 \le i_{1} \le \dots{} \le i_{l}}}\frac{1}{n}\left(\prod_{i=1}^{l}2^{i_{j}}\right)^{\frac{k}{l}-k\varepsilon} \le \frac{1}{n}\left(\prod_{i=1}^{l}\left(\sum_{i_{j}=1}^{\lfloor \frac{1}{k}\log_{2}n + 1 \rfloor}2^{i_{j}}\right)\right)^{\frac{k}{l}-k\varepsilon}
\]
\[
= \frac{1}{n}\left(\prod_{i=1}^{l}\left(2^{\lfloor \frac{1}{k}\log_{2}n + 1 \rfloor} - 2\right)\right)^{\frac{k}{l}-k\varepsilon} = \frac{1}{n}\left(\left(2^{\lfloor \frac{1}{k}\log_{2}n + 1 \rfloor} - 2\right)^{l}\right)^{\frac{k}{l}-k\varepsilon}
\]
\[
\ll \frac{1}{n}((n^{1/k})^{l})^{\frac{k}{l}-k\varepsilon} = n^{-l\varepsilon} \ll n^{-\varepsilon}.
\]
Next, we estimate $Q_{2}$. Using also (4) we get that
\[
Q_{2} = \sum_{\overset{(i_{1}, \dots{} ,i_{l})}{0 \le i_{1} \le \dots{} \le i_{l}}}\left(\prod_{i=1}^{l}2^{i_{j}}\right)^{-k\varepsilon} \le \left(\prod_{i=1}^{l}\left(\sum_{i_{j}=1}^{\lfloor \frac{1}{k}\log_{2}n + 1 \rfloor}2^{i_{j}}\right)\right)^{-k\varepsilon}
\]
\[
= \left(\prod_{i=1}^{l}\left(2^{\lfloor \frac{1}{k}\log_{2}n + 1 \rfloor} - 2\right)\right)^{-k\varepsilon} = \left(\left(2^{\lfloor \frac{1}{k}\log_{2}n + 1 \rfloor} - 2\right)^{l}\right)^{-k\varepsilon}
\]
\[
\ll ((n^{l/k})^{l})^{-k\varepsilon} = n^{-l\varepsilon} \ll n^{-\varepsilon}.
\]
Grouping these estimates together, 
\[
Q = Q_{1} + Q_{2} \ll n^{-\varepsilon}. 
\]
Returning to (3) we now have the estimation
\[
\mathbb{E}(R_{A,l}(n)) \ll n^{-\varepsilon}.
\]
It follows from Lemma 6 that, with probability 1, $R_{A,h}(n)$ is bounded. 
On the other hand, by using the Euler-Maclaurin formula,
\[
\mathbb{E}(A(x)) = \sum_{m \le x^{1/k}}\frac{1}{(m^{k})^{\frac{1}{k}-\frac{1}{h}+\varepsilon}} = \int_{0}^{x^{1/k}}t^{-1+\frac{k}{h}-k\varepsilon}dt + O(1) = \frac{1}{\frac{k}{h}-k\varepsilon}x^{\frac{1}{h}-\varepsilon} + O(1),
\]
which implies that $A(x) \gg x^{\frac{1}{h}-\varepsilon}$ with probability 1.

\bigskip

\noindent \textit{Remark.} One might like to generalize Theorem 2 and Theorem 3 to $B_{h}[g]$ sets, i.e., to prove the existence of a set $A$ formed by perfect powers 
such that $R^{*}_{A,h}(n) \le g$ for some $g$ and $A$ is as dense as possible. 
To do this, one needs a generalization of Lemma 6 and Lemma 8 for the number of representations of $n$ as linear forms like 
$b_{1}x_{1} + \dots{} + b_{s}x_{s} = n$. Lemma 6 can be extended to linear forms but the generalization of Lemma 8 seems more complicated.

\bigskip
\bigskip


\begin{thebibliography}{99}

\bibitem{AS} \textsc{N. Alon, J. Spencer}. {\it The Probabilistic Method, Fourth edition}, Wiley, Tel Aviv and New York, (2015).
\bibitem{CE} \textsc{J. Cilleruelo}.\ \textit{Probabilistic constructions of $B_{2}[g]$ sequences}, Acta Mathematica Sinica (Engl. Ser.), \textbf{26} (2010), 1309-1314.
\bibitem{CE} \textsc{J. Cilleruelo, S.Z. Kiss, I.Z. Ruzsa, C. Vinuesa}.
\textit{Generalization of a theorem of Erd\H{o}s and R\'enyi on Sidon sequences}, Random Structures and Algorithms, \textbf{37} (2010), 455-464.
\bibitem{rad} \textsc{P. Erd\H{o}s, R. Rado}. \textit{Intersection theorems for system of sets}, Journal of London Mathematical Society, \textbf{35} (1960), 85-90.
\bibitem{ER} \textsc{P. Erd\H{o}s, A. R\'enyi}. \textit{Additive properties
  of random sequences of positive integers}, Acta Arithmetica, \textbf{6}
(1960), 83-110.
\bibitem{PT} \textsc{P. Erd\H{o}s, P. Tetali}. \textit{Representations of integers as the sum of k terms}, Random Structures and Algorithms, \textbf{1} (3) (1990), 245-261. 
\bibitem{HR} \textsc{R.K. Guy}. \textit{Unsolved problems in number theory}, 
2nd ed., Springer - Verlag, New York, 1994.
 \bibitem{HR} \textsc{H. Halberstam, K.F. Roth}. \textit{Sequences}, Springer
  - Verlag, New York, 1983.
\bibitem{HW} \textsc{G.H. Hardy, J.E. Littlewood}. \textit{Some  problems  of ``Partitio  Numerorum" (VI): Further researches in Waring's problem}, Math. Z. 
\textbf{23} (1925), 1-37.
\bibitem{HW} \textsc{G.H. Hardy, E.M. Wright}. \textit{An introduction to the theory of numbers}, Oxford at the Clarendon Press, 1954.
\bibitem{HU} \textsc{L.K. Hua}. \textit{Introduction to number theory}, Springer
  - Verlag, Berlin-New York, 1982. 
\bibitem{La} \textsc{E. Landau}. \textit{\"Uber die Einteilung der positive ganzen Zahlen in vier Klassen nach der Mindestzahl der zu ihrer additiven Zusammensetzung erforderlichen Quadrate}, Arch. Math u. Phys. (3) 13 (1908), 305-312.
\bibitem{Lp}\textsc{L.J. Lander; T.R. Parkin; J.L. Selfridge}. \textit{A Survey of Equal Sums of Like Powers}. Mathematics of Computation. 21 (1967) (99): 
446-459.
\bibitem{KM} \textsc{K. Mahler}. 
\textit{Note on Hypothesis K of Hardy and Littlewood}, 
J. London Math. Soc., \textbf{11} (1936), 136-138.
\bibitem{SD} \textsc{H.P.F. Swinnerton-Dyer}. \textit{$A^{4} + B^{4} = C^{4} + D^{4}$ revisited}, J. London Math. Soc., \textbf{43} (1968), 149-151.
\bibitem{VW} \textsc{R.C. Vaughan, T.D. Wooley}. \textit{Waring's problem: A survey}, Number theory for the millennium, III (Urbana, IL, 2000), 301-340, 
A K Peters, Natick, MA, 2002. 
\bibitem{VU} \textsc{V.H. Vu}. \textit{On a refinement of Waring's problem}, 
Duke Math. J., \textbf{105} No. 1 (2000), 107-134.

\end{thebibliography}
\end{document}